\documentclass[journal,10pt,onecolumn]{IEEEtran}
\usepackage{amsmath,amssymb,dsfont,graphicx,color,lmodern}
\usepackage{amsthm}
\usepackage[T1]{fontenc}
\usepackage[utf8]{inputenc}

\begin{document}
\newtheorem{thm}{Theorem}
\newtheorem{cor}[thm]{Corollary}
\newtheorem{conj}[thm]{Conjecture}
\newtheorem{lemma}[thm]{Lemma}
\newtheorem{proposition}[thm]{Proposition}
\newtheorem{problem}{Problem}
\newtheorem{remark}{Remark}
\newtheorem{definition}{Definition}
\newtheorem{example}{Example}

\newcommand{\argmax}{\mathop{\operatorname{argmax}}}
\newcommand{\bp}{{\bm{p}}}
\newcommand{\bq}{{\bm{q}}}
\newcommand{\bc}{{\bm{c}}}
\newcommand{\be}{{\bm{e}}}
\newcommand{\br}{{\bm{r}}}
\newcommand{\bv}{{\bm{v}}}
\newcommand{\bx}{{\bm{x}}}
\newcommand{\bbf}{{\bm{f}}}
\newcommand{\balpha}{{\bm{\alpha}}}
\newcommand{\bbeta}{{\bm{\beta}}}
\newcommand{\bdelta}{{\bm{\delta}}}
\newcommand{\bPi}{{\bm{\Pi}}}
\newcommand{\bC}{{\bm{C}}}
\newcommand{\bL}{{\bm{L}}}
\newcommand{\bI}{{\bm{I}}}
\newcommand{\bP}{{\bm{P}}}
\newcommand{\bD}{{\bm{D}}}
\newcommand{\bQ}{{\bm{Q}}}
\renewcommand{\dagger}{{T}}
\newcommand{\cE}{{\mathcal{E}}}
\newcommand{\cS}{{\mathcal{S}}}
\newcommand{\cF}{{\mathcal{F}}}
\newcommand{\cW}{{\mathcal{W}}}
\newcommand{\cQ}{{\mathcal{Q}}}
\newcommand{\differential}{{\rm{d}}}
\newcommand{\interior}[1]{{\kern0pt#1}^{\mathrm{o}}}
\newcommand{\mR}{{\mathbb R}}
\newcommand{\mZ}{{\mathbb Z}}
\newcommand{\Prob}{{\rm Pr}}
\newcommand{\D}{{\mathbb D}}
\newcommand{\cX}{{\mathcal X}}
\newcommand{\cT}{{\mathcal T}}
\newcommand{\E}{{\mathbb E}}
\newcommand{\mcN}{{\mathcal N}}
\newcommand{\mcR}{{\mathcal R}}
\newcommand{\diag}{\operatorname{diag}}
\newcommand{\tr}{\operatorname{trace}}
\renewcommand{\odot}{\circ}
\newcommand{\ignore}[1]{}
\newcommand{\ud}{{\rm{d}}}
\newcommand{\W}{{W}}
\newcommand{\tf}{t_{\rm cycle}}
\newcommand{\Wirr}{W_\text{irr}}
\newcommand{\Pac}{\mathcal{P}_{2,\text{ac}}(\mR^d)}
\newcommand{\calA}{\mathcal{A}}
\newcommand{\calS}{\mathcal{S}}
\newcommand{\Ltwo}{{\rho}}
\newcommand{\kbt}{k_BT}
\newcommand{\dbar}{d\hspace*{-0.08em}\bar{}\hspace*{0.1em}}
\newcommand{\lr}[2]{\langle#1,#2\rangle}
\newcounter{rmnum}
\newenvironment{romannum}{\begin{list}{{\upshape (\roman{rmnum})}}{\usecounter{rmnum}
			\setlength{\leftmargin}{12pt}
			\setlength{\rightmargin}{8pt}
			\setlength{\itemsep}{2pt}
			\setlength{\itemindent}{-1pt}
	}}{\end{list}}

\newcommand{\magenta}{\color{magenta}}
\newcommand{\red}{\color{red}}
\newcommand{\blue}{\color{blue}}
\definecolor{gray}{rgb}{0.5,0.5,0.5}
\newcommand{\gray}{\color{gray}}

\definecolor{lgrey}{rgb}{0.9,.7,0.7}

\newcommand{\symsum}{\displaystyle\sum_{\rm{symm}}}
\newcommand{\Amir}[1]{{\color{red}#1}}
\def\spacingset#1{\def\baselinestretch{#1}\small\normalsize}
\setlength{\parindent}{10pt}
\setlength{\parskip}{10pt}
\spacingset{1}

\title{
Maximal power output of\\a stochastic thermodynamic engine}

\author{Rui Fu, \and Amirhossein Taghvaei, \and Yongxin Chen, \and Tryphon T.\ Georgiou
\thanks{R.\ Fu, A.\ Taghvaei, and T.\ T. Georgiou are with the Department of Mechanical and Aerospace Engineering, University of California, Irvine, CA; rfu2@uci.edu, ataghvae@uci.edu, tryphon@uci.edu}
\thanks{Y.~Chen is with the School of Aerospace Engineering, Georgia Institute of Technology, Atlanta, GA 30332; {yongchen@gatech.edu}}}

\markboth{\today}{ }

\maketitle

\begin{abstract}
Classical thermodynamics aimed to quantify the efficiency of thermodynamic engines, by bounding the maximal amount of mechanical energy produced, compared to the amount of heat required. While this was accomplished early on, by Carnot and Clausius, the more practical problem to quantify limits of power that can be delivered, remained elusive due to the fact that quasistatic processes require infinitely slow cycling, resulting in a vanishing power output. Recent insights, drawn from stochastic models, appear to bridge the gap between theory and practice in that they lead to physically meaningful expressions for the dissipation cost in operating a thermodynamic engine over a finite time window. 
Indeed, the problem to optimize power can be expressed as a stochastic control problem.
Building on this framework of {\em stochastic thermodynamics} we derive bounds on the maximal power that can be drawn by cycling an overdamped ensemble of particles via a time-varying potential while alternating contact with heat baths of different temperature ($T_c$ cold, and $T_h$ hot). Specifically, assuming a suitable bound $M$ on the spatial gradient of the controlling potential, we show that the maximal achievable power is bounded by $\frac{M}{8}(\frac{T_h}{T_c}-1)$. Moreover, we show that this bound can be reached to within a factor of $(\frac{T_h}{T_c}-1)/(\frac{T_h}{T_c}+1)$ by operating the cyclic thermodynamic process with a quadratic potential.

 \end{abstract}

\noindent{\bf Keywords:} Non-equilibrium Thermodynamics, Optimal Transportation, Mean-field optimal control


\section{Introduction}
Thermodynamics is the branch of physics which is concerned with the relation between heat and other forms of energy. Historically, it was born of the quest to quantify the maximal efficiency of heat engines, i.e., the maximal ratio of the total work output over the total heat input to a thermodynamic system. This was accomplished in the celebrated work of Carnot ~\cite{carnot1986reflexions,callen1998thermodynamics} where, assuming that transitions take place infinitely slowly ({\it quasi-static} operation), it was shown that the maximal efficiency possible is $\eta_C=1-T_c/T_h$ ({\em Carnot efficiency}), where $T_h$ and $T_c$ are the absolute temperatures of two heat reservoirs, hot and cold respectively, with which the heat engine makes contact with during phases of a periodic operation known as {\em Carnot cycle}. 

Carnot's result provides the absolute theoretical limit for the efficiency of a heat engine, but provides no insight on the amount of power output that can be achieved. Specifically, in order to reach Carnot efficiency, the period of the Carnot cycle must tend to infinity, resulting in quasi-static operation with vanishing total power output.
Whereas, to achieve finite power output in a thermodynamic process, this must take place in finite time, and thereby, away from {\em equilibrium} \cite{casas2003temperature,lebon2008understanding,de2013non}.

To this end, the framework of stochastic thermodynamics~\cite{seifert2008stochastic,sekimoto2010stochastic,seifert2012stochastic,parrondo2015thermodynamics,dechant2016underdamped,brockett2017thermodynamics} has been developed in recent years, to allow quantifying work in non-equilibrium thermodynamic transitions. It is rooted in probabilistic models in the form of stochastic differential equations to specify the behaviour of particles in a thermodynamic ensemble. Manipulation of the ensemble is effected by a confining potential that serves as a {\em control input}. This potential, together with a heat reservoir in contact, couples the ensemble to the environment. Work and heat being transferred can then be computed at the level of individual particles and averaged over the ensemble.
Important goals of the theory have been to assess the amount of work needed for {\em bit-erasure in finite time} \cite{TalBhaSal17,MelTalSal18} and hence computation, i.e., a finite-time Landauer bound, as well as assessing the efficiency of thermodynamic engines operating at maximal power.

The question of efficiency at maximal power was studied independently by Chambadal \cite{chambadal1957centrales}, Novikov \cite{novikov1958efficiency} and Curzon and Ahlborn   \cite{curzon1975efficiency} based on a certain ``endoreversible'' assumption to reflect finite-time heat transfer. They derived the bound $\eta_{CA}=1-\sqrt{T_c/T_h}=1-\sqrt{1-\eta_C}$, where the $T_h$ and $T_c$ designate temperatures of a hot and cold heat reservoir, respectively, at maximal power estimated to be $k(\sqrt {T_h}-\sqrt{T_c})^2$, with $k$ being the heat conductance. Subsequent works, most notably by Chen and Yan \cite{chen1989effect}, based on differing sets of assumptions, arrived at different bounds. More recently Schmiedl and Seifert \cite{schmiedl2007efficiency}, sought to improve, and reconcile these earlier results within the framework of stochastic thermodynamics, albeit for thermodynamic ensembles transitioning between Gaussian distributions. It is fair to say that there is no consensus on the firmness of these expressions, and that they serve as a guide to actual performance of thermodynamic engines. 

The present work focuses on maximizing power in general, relaxing the Gaussian assumption, within the context of stochastic thermodynamics \cite{sekimoto2010stochastic,seifert2012stochastic}. This is a {\em stochastic control problem}. Our analysis is based on an overdamped Langevin model for thermodynamic processes (with damping coefficient $\gamma$), and explores advantages and pitfalls of selecting arbitrary control input, i.e., confining potential, for steering thermodynamic ensembles through cyclic operation while alternating contact between available heat reservoirs. It is noted that without physically motivated constraints on the actuation potential, the power ouput can become unbounded. The salient feature of actuation (time-varying potential $U(t,x)$, with $t$ denoting time and $x\in\mR^d$ the spacial coordinate) that draws increasing amounts of power is its ability to drive the thermodynamic ensemble to a state of very low entropy.
Indeed, the magnitude of the spatial gradient of the potential $\nabla_xU(t,x)$ plays a key role. Thus, it is reasonable on physical grounds to suitably constrain this mode of ``control'' actuation, that is responsible for energy exchange between the ensemble and the environment. The present work puts forth and motivates the bound\footnote{Interestingly, this can also be expressed in information theoretic terms, as a bound on the Fisher information of thermodynamic states.} (equation \eqref{eq:U-constraint})
\[
\frac{1}{\gamma}\int_{\mR^d}\|\nabla_x U(t,x)\|^2 \rho(t,x)\,\ud x \leq M,
\]
where $\rho$ denotes the thermodynamic state, as a suitable such constraint, and under this assumption it is shown that a maximal amount of power output that can be extracted by cyclic operation of a Carnot-like engine is
\[
 \frac{M}{8}(\frac{T_h}{T_c}-1) \left(\frac{\frac{T_h}{T_c}-1}{\frac{T_h}{T_c}+1}\right)\leq P_{\rm max}\leq  \frac{M}{8}(\frac{T_h}{T_c}-1).
\]
That is, the upper bound $\frac{M}{8}(\frac{T_h}{T_c}-1)$ on power output only depends on $M$ and the temperature of the two heat baths\footnote{In general power output is an {\em extensive} quantity, as it depends on the size of the thermodynamic ensemble/engine. However, in our treatment, the ensemble is described by a probability distribution (normalized). Hence, the bounds appear as ``intensive.''}. Moreover, this bound can be attained within a factor of $(\frac{T_h}{T_c}-1)/(\frac{T_h}{T_c}+1)$, which depends only on the ratio of temperatures of the two heat baths as well.

The exposition proceeds as follows. Section \ref{sec:model} details the stochastic model thermodynamic ensembles and the heat/energy exchange mechanism. Section \ref{Sec-III-Second-law.tex} explores a connection between the second law of thermodynamics and the Wasserstein geometry of optimal mass transport that underlies the mechanism of energy dissipation in thermodynamic transitions. Section \ref{Sec-IV-cycle.tex} returns to the concept of a cyclicly operated thermodynamic engine and expresses the optimal efficiency and power output as functions of the operating protocol (solution of a stochastic control problem that dictates the choice of control time-varying potential), temperature of heat reservoirs, timing of the cyclic operation, and thermodynamic states at the end of phases of the Carnot-like cycle. Section \ref{Sec-V-power.tex} contains the main results regarding seeking maximal power output. Specifically, Section \ref{sec:51} explains optimal scheduling times, Section \ref{sec:caveat} highlights questions that arise based on physical grounds for Gaussian thermodynamic states, Sections \ref{sectionVC} and \ref{sec:opt-pa} discuss optimal thermodynamic states at the two ends of the Carnot-like cycle, and Sections \ref{sec:55} and \ref{sec:56} derive bounds on maximal achievable power with or without constraint on the controlling potential.
A concluding remarks section recaps and points to future research directions and open problems.

\section{Stochastic thermodynamic models}
\label{sec:model}

We begin by describing the basic model for a {\em thermodynamic ensemble} used in this work. This consists of a large collection of Brownian particles that interact with a {\em heat bath} in the form of a stochastic excitation and driven under the influence of an {\em external (time varying) potential} between end-point states. The dynamics of individual particles are expressed in the form of stochastic differential equations. 

\subsection{Langevin dynamics}

The (under-damped) Langevin equations
\begin{subequations}\label{eq:underdamped-Langevin}
\begin{align}
\ud X_t &= \frac{p_t}{m} \ud t\label{eq:underdamped-Langevin-x}\\
\ud p_t &= -\nabla_x U (t,X_t)\ud t - \gamma \frac{p_t}{m} \ud t + \sqrt{2\gamma k_BT(t)}\ud B_t,\label{eq:underdamped-Langevin-p}
\end{align}
\end{subequations}
represent a standard model for molecular systems interacting with a thermal environment. 
Throughout, 
$X_t\in \mR^d$ denotes the location of a particle and $p_t$ denotes its momentum at time $t$, $U(t,x)$ denotes a time-varying potential for $x\in \mR^d$, $m$ is the mass of the particle, $\gamma$ is the viscosity coefficient, $k_B$ is the Boltzmann constant, $T(t)$ denotes the temperature of the heat bath at time $t$, and $B_t$ denotes a standard $\mR^d$-valued Brownian motion. 

In this paper, we consider only the case where inertial effects in the Langevin equation~\eqref{eq:underdamped-Langevin-p} are negligible for the time resolution of interest. 
Specifically, when the temporal resolution $\Delta t \gg \frac{m}{\gamma}$, averaging out the fast variable $p_t$ leads to the {\em over-damped Langevin equation}
\begin{equation}
\ud X_t = - \frac{1}{\gamma} \nabla_x U (t,X_t)\ud t + \sqrt{\frac{2k_BT(t)}{\gamma}}\ud B_t.
\label{eq:overdamped-Langevin}
\end{equation} 
Intuitively, the over-damped Langevin equation is obtained from~\eqref{eq:underdamped-Langevin-p} by setting $\ud p_t = 0$ and replacing $\frac{p_t}{m}\ud t = \ud X_t$. For a more detailed explanation see~\cite[page 20]{sekimoto2010stochastic}.

Thus, we view $\{X_t\}_{t \geq 0}$ as a diffusion process. The state of the thermodynamic ensemble is identified with the probability density of $X_t$, denoted by $\rho(t,x)$, which satisfies
the Fokker-Planck equation
\begin{equation}
\label{eq: Fokker-Planck}
\frac{ \partial{\rho}}{\partial{t}}-\frac{1}{\gamma}\nabla_x\cdot \left[ (\nabla_x U +k_BT\nabla_x \log \rho)\rho\,\right]=0.
\end{equation}

\subsection{Heat, work, and the first law}

The evolution of the thermodynamic ensemble under the influence of the time-varying thermal environment and the 
time-varying potential $U(t,x)$, leads to exchange of heat and work, respectively. Heat and work can be defined at the level of a single particle as explained below. 

The energy exchange between an individual particle and the thermal environment
represents {\em heat}. This exchange is effected by forces exerted on the particle due to viscosity ($-\gamma \frac{\ud X_t}{\ud t}$) and due to the random thermal excitation ($\sqrt{2\gamma k_BT}\frac{\ud B_t}{\ud t}$). It can be formally expressed as the product of force and displacement, in Stratonovich form, as
$
 (-\gamma \frac{\ud X_t}{\ud t} + \sqrt{2\gamma k_BT}\frac{\ud B_t}{\ud t}) \circ \ud X_t,
 $ 
 which, using \eqref{eq:overdamped-Langevin}, leads to
\begin{align}
\dbar Q &= \nabla_xU(t,X_t) \circ \ud X_t \label{eq:dQ}\\\nonumber
&=\nabla_xU(t,X_t) \circ(- \frac{1}{\gamma} \nabla_x U (t,X_t)\ud t + \sqrt{\frac{2k_BT(t)}{\gamma}}\ud B_t)\\\nonumber
&=- \frac{1}{\gamma} \|\nabla_x U (t,X_t)\|^2\ud t + \Delta_x U(t,X_t) \frac{k_BT(t)}{\gamma}\ud t\\\nonumber
&\hspace*{2cm}+\nabla_x U (t,X_t) \sqrt{\frac{2k_BT(t)}{\gamma}}\ud B_t,
\end{align}
where the last step includes the correction due to changing into the It\^o form.
Note that we use $\dbar$ to emphasize that $\dbar Q$ is not a perfect differential in that the integral $\int \dbar Q$ depends on the path taken and not just end-point conditions.

The energy exchange between an individual particle and the external potential represents {\em work}. Specifically, the work transferred to the particle by a change in the actuating potential is
\begin{align}
\dbar W &= \frac{\partial U}{\partial t}(t,X_t) \ud t. \label{eq:dW}
\end{align}

Naturally, the {\em first law of thermodynamics}, 
\[
\ud U(t,X_t)=\dbar Q + \dbar W 
\]
holds, since the internal energy is simply the value of the potential.

Accordingly, for a thermodynamic ensemble at a state $\rho(t,x)$, the heat and work differentials are expressed as
\begin{subequations}
\begin{align}\label{eq:dbbQ}
\dbar \mathcal Q &=\left[ \int_{\mR^d} \left(- \frac{1}{\gamma} \|\nabla_x U\|^2 + \Delta_x U \frac{k_BT}{\gamma}\right)\rho\,\ud x\right] \ud t\\\label{eq:dbbW}
\dbar \mathcal W &= \left[ \int_{\mR^d} \frac{\partial U}{\partial t}\rho\,\ud x\right]\ud t,
\end{align}
leading to the first law for the ensemble
\[
\ud \mathcal E(\rho,U) = \dbar \mathcal Q + \dbar \mathcal W,
\]
where the internal energy is
\begin{align}\label{eq:dbbE}
 \mathcal E(\rho,U) &=  \int_{\mR^d} U\rho\,\ud x,
\end{align}
and depends on $\rho,U$, whereas $\mathcal Q,\mathcal W$ depend on the path.
\end{subequations}

\subsection{Summary notation}
As usual, $\mR^d$ denotes the $d$-dimensional Euclidean space, for $d\in \mathbb N$, with $\lr{x}{y}$ and $\|x\|=\sqrt{\lr{x}{x}}$ denoting the respective inner product and norm, for $x,y\in \mR^d$.
For two vector fields $\nabla_x\phi_1,\nabla_x\phi_2$, we denote
$\langle\nabla_x\phi_1,\nabla_x\phi_2\rangle_\rho=\int_{\mR^d}\langle\nabla_x\phi_1,\nabla_x\phi_2\rangle\rho\ud x$, and $\|\nabla_x\phi\|_\rho^2:=\langle\nabla_x\phi,\nabla_x\phi\rangle_\rho$.
The Gaussian distribution with mean $m$ and covariance $\Sigma$ is denoted by $N(m,\Sigma)$.
For convenience we provide Table \ref{table:units} of the various quantities, including the corresponding units in SI format: Newton ($N$), seconds ($s$), meter ($m$), absolute temperature in degrees Kelvin ($\,^o$K).
\begin{table}[t]\label{table:units}
	\centering
\begin{tabular}{|l| l l|}\hline
Definition & Notation & Units\\\hline
time & $t$ & $s$ \\
position of particle & $X_t$ & $m$ \\
Boltzmann constant & $k_B$ & $Nm$ \\
damping coefficient\ & $\gamma$ & $Ns/m$\\
potential & $U(t,x)$ & $Nm$ \\
temperature & $T$ & $\,^o$K\\
Brownian motion & $B_t$ & $s^{\frac{1}{2}}$ \\
density in $\mR^d$& $\rho(t,x)$ & $m^{-d}$ \\
velocity field in $\mR^d$& $v(t,x)$ & $m/s$ \\
Wasserstein length & $\text{length}_{W_2}(\cdot)$ & $m$ \\
entropy & $\cS(\rho)$ & $Nm$ \\
work (particle/ensemble) & $W,\cW$ & $Nm$ \\
heat (particle/ensemble) & $Q,\cQ$ & $Nm$ \\
energy (particle/ensemble) & $U,\cE$ & $Nm$ \\
free energy  & $\cF$ & $Nm$ \\
bound in \eqref{eq:U-constraint}& $M$ & $Nm/s$ \\
power& $P$ & $Nm/s$\\\hline
\end{tabular}
\caption{Symbols and corresponding units}
\end{table}

\section{The second law, dissipation, and Wasserstein geometry}\label{Sec-III-Second-law.tex}

We now discuss the {\em second law of thermodynamics} in the context of an ensemble of particles obeying over-damped Langevin dynamics~\eqref{eq:overdamped-Langevin},  assuming that the temperature of the heat bath remains constant, i.e., $T(t)=T$. The classical formulation amounts to the inequality
	\begin{align}\label{eq:second}
	\cW -\Delta\cF \geq 0
	\end{align}
	where $\cW$ is the work transferred to the ensemble over a time interval $(t_i,t_f)$, namely, 
	\begin{equation*}
	\cW = \int_{t_i}^{t_f} \dbar \cW,
	\end{equation*}
	and $\Delta\cF$ is the change in free energy 
	\begin{equation}
	\cF(\rho,U) = \cE(\rho,U) - T\cS(\rho)
	\end{equation}
between the two end-point states\footnote{The free energy is the amount of energy that can be delivered at  temperature $T$ with fixed potential $U$. A rather revealing re-write of the free energy is as the relative entropy (KL-divergence) between the current state $\rho$ and the Gibbs distribution
$	\rho_{\rm Gibbs}(x)=e^{-\beta U(x)}/Z$, where $\beta=1/k_BT$ and the normalizing factor
$Z=\int_{\mR^d} e^{-\beta U(x)}\ud x$
is the partition function. Specifically, $\cF(\rho,U)=\beta^{-1}\int_{\mR^d} \log(\frac{\rho(x)}{\rho_{\rm Gibbs}(x)}) \rho(x)\ud x - \beta^{-1}\log(Z)$.}.
Here,
	\begin{equation}
	\cS(\rho) = -k_B\int_{\mR^d} \log(\rho)\,\rho\,\ud x
	\end{equation}
	denotes the entropy of the state $\rho$, and $U$ denotes the potential as earlier.   
	
	Inequality \eqref{eq:second} becomes equality for quasi-static (reversible) thermodynamic transitions. In general, for irreversible transitions, the gap in \eqref{eq:second} quantifies dissipation. Interestingly, alternative formulations that shed light into irreversible transitions have recently been discovered. A most remarkable identity was discovered by Jarzynski in the late 90's \cite{jarzynski1997nonequilibrium} to hold for irreversible thermodynamic transitions between work and free energy, in the form,
	\begin{align*}
	\mathbb E \left\{ e^{-\beta W}\right\} - e^{-\beta \Delta \cF}=0,
	\end{align*}
	or, equivalently,
	\begin{align*}
	-\beta^{-1}\log\mathbb E \left\{ e^{-\beta W}\right\} - \Delta \cF=0,
	\end{align*}
	where $\beta^{-1}=k_BT$ and $\Delta \mathcal F$ denotes difference between equilibrium free energy, while the expectation is taken over the probability law on paths.
	
	While the Jarzynski relation establishes equality between the above functional of the work and free energy differences, it does not allow quantifying the actual expected work performed on the ensemble.
	An alternative identity that quantifies explicitly the gap in \eqref{eq:second} holds for irreversible thermodynamic transitions. This identity is
	\begin{subequations}\label{eq:precisedissipations}
		\begin{align}\label{eq:precisedissipation}
		\cW- \Delta \cF = \underbrace{ \gamma \int_{t_i}^{t_f} \big\| 
		\nabla_x\phi(t,\cdot)
		\big\|^2_{ \rho(t,\cdot )} \ud t,}_{\rm dissipation}
		\end{align}
		where
		\begin{align}\label{eq:averagekinetic}
		&
		 \big\| 
		\nabla_x\phi
		\big\|^2_{ \rho}
		= \int_{\mR^d}  \|\nabla_x \phi\|^2 \rho\,\ud x,\\& \frac{\partial \rho}{\partial t}+\nabla_x\cdot (\rho\;\underbrace{\nabla_x \phi}_{v}\,)=0.\label{eq:continuity0}
		\end{align}
	\end{subequations}
	
	The dissipation has the form of an {\em action integral}, since $v=\nabla_x \phi$ is a velocity field that specifies the drift of the ensemble and, thereby, $\|\nabla_x \phi\|^2_\rho$ is the {\em averaged kinetic energy of the ensemble}.
	It is also seen that the dissipation depends solely on the {\em time-parametrized} path
	\begin{align*}
	\rho_{[t_i,t_f]}:=\{\rho(t,\cdot )\mid t\in[{t_i},{t_f}]\}.
	\end{align*}
	Specifically, given any ``tangent''  $\frac{\partial \rho}{\partial t}=\delta$ at any point $\rho(t,\cdot )$ along the path $\rho_{[t_i,t_f]}$, the {\em Poisson equation} \eqref{eq:continuity0} can be solved for $\phi(t,x)$ with $x\in\mR^d$, giving rise to the time-varying, spatially irrotational vector field $v=\nabla_x \phi$ that transitions the thermodynamic system from the starting state $\rho(t,\cdot )$ to $\rho+\delta \ud t$ over the time window $[t,t+\ud t]$.
	
	Interestingly, this recipe of identifying tangent perturbations $\delta$ (i.e., functions $\delta(x)$ such that $\int_{\mR^d}\delta \ud x=0$) with an irrotational field $v=\nabla_x \phi$ via the Poisson equation, instills on the space of probability distributions a Riemannian-like structure via the quadratic form
	\begin{align}\label{eq:innerproduct}
	\int_{\mR^d}  \langle \nabla_x \phi_1,\nabla_x\phi_2\rangle \rho\,\ud x.
	\end{align}
Herein, probability distributions are assumed to have finite second-order moment and be positive. The space of probability distributions (or, measures  \cite{villani2003topics}, with finite second-order moments) with this inner-product structure is known as the Wasserstein manifold $\mathcal P_2(\mR^d)$.
	
	The geodesic distance, i.e., the minimum of
	\begin{align}\label{eq:geodistance}
	\text{length}_{W_2}\left(\rho_{[{t_i},{t_f}]}\right)&=
	\int_{t_i}^{t_f} \big\| \nabla_x\phi(t,\cdot)
	\big\|_{ \rho(t,\cdot )} \ud t,
	\end{align}
	over paths $\rho_{[{t_i},{t_f}]}$ connecting the two end-points
	$\rho_{t_i},\rho_{t_f}$, turns out to be precisely the Wasserstein metric
	\begin{align}\label{eq:W2}
	W_2(\rho_{t_i},\rho_{t_f}):=\sqrt{\inf_{\pi\in\Pi(\rho_{t_i},\rho_{t_f})}\int_{\hspace*{-3pt}\,_{\mR^d\times\mR^d}}\hspace*{-12pt}\|x-y\|^2 \ud \pi(x,y)}
	\end{align}
	of the Monge-Kantorovich theory of Optimal Mass Transport for quadratic transportation cost. Here, $\Pi(\rho_{t_i},\rho_{t_f})$ denotes the probability measures on the product space
	$\mR^d\times\mR^d$ with $\rho_{t_i}(x)$ and $\rho_{t_f}(y)$ as marginals;  see \cite[Chapter 8]{villani2003topics}) and especially \cite{ambrosio2008gradient} for a detailed exposition of the differential structure of $\mathcal P_2(\mR^d)$.
	
	Returning to \eqref{eq:precisedissipations}, 
	provided the averaged kinetic energy \eqref{eq:averagekinetic} during thermodynamic transitions remains constant over time (which can be ensured by a suitable scaling of time of the path $\rho_{[t_i,t_f]}$),
	\begin{align}\label{eq:geodequality}
	\cW- \Delta \cF =\frac{\gamma}{t_f-t_i}\left(\text{\rm length}_{W_2}\left(\rho_{[{t_i},{t_f}]}\right)\right)^2,
	\end{align}
	while in general, the right hand side of \eqref{eq:geodequality} serves as a lower bound. If in addition to constancy of \eqref{eq:averagekinetic} the path is selected as a $W_2$-geodesic, then
	\begin{align}\label{eq:precisedissipation2}
	\cW- \Delta \cF &= \frac{\gamma}{t_f-t_i} W_2(t_i,t_f)^2,
	\end{align}
	which quantifies the maximum amount of work that can be drawn by transitioning between specified end-point thermodynamic states.
	We recap the key points in the following statement.
	\begin{thm}\label{thm1}
		Consider the overdamped model \eqref{eq:overdamped-Langevin} for thermodynamic transitions between states $\rho_{t_i}$, $\rho_{t_f}$, under constant temperature $T$ and a time-varying potential $U$. The following hold:\\[.02in]
		i) In general,
		\begin{align}\label{eq:precisedissipation1}
		\cW- \Delta \cF \geq \frac{\gamma}{t_f-t_i}\left(\text{\rm length}_{W_2}\left(\rho_{[{t_i},{t_f}]}\right)\right)^2.
		\end{align}
		ii) Relation \eqref{eq:precisedissipation1} holds as the equality in \eqref{eq:geodequality}
		for a suitable time-reparametrization of the path $\rho_{[{t_i},{t_f}]}$ of the thermodynamic ensemble, effected by a suitable choice of potential. \\[.02in]
		iii) There is a unique path $\rho_{[{t_i},{t_f}]}$ ($W_2$-geodesic) for the thermodynamic transition that attains minimal dissipation and, in this case, \eqref{eq:precisedissipation2} holds.
	\end{thm}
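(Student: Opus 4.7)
The plan is to prove the dissipation identity \eqref{eq:precisedissipation} first, then deduce the three parts by Cauchy--Schwarz combined with the Benamou--Brenier characterization of $W_2$. To establish \eqref{eq:precisedissipation}, I would differentiate $\cF(\rho,U) = \int U\rho\,\ud x - T\cS(\rho)$ along the Fokker--Planck flow. The piece $\int (\partial_t U)\rho\,\ud x$ is exactly $\dbar\cW/\ud t$, while the remaining contributions, after substituting $\partial_t\rho = -\nabla_x\cdot(\rho\nabla_x\phi)$ from \eqref{eq:continuity0} and integrating by parts, become $\int (\nabla_x U + k_BT\nabla_x\log\rho)\cdot\nabla_x\phi\,\rho\,\ud x$. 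Using the identification $\gamma\nabla_x\phi = -(\nabla_x U + k_BT\nabla_x\log\rho)$ that links \eqref{eq: Fokker-Planck} to \eqref{eq:continuity0}, this collapses to $-\gamma\|\nabla_x\phi\|^2_\rho$, and integration in $t$ yields \eqref{eq:precisedissipation}.

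For part (i), I would note that by the Benamou--Brenier formula the Wasserstein length of a smooth path in $\mathcal P_2(\mR^d)$ equals the $L^2$-action of the unique irrotational velocity field transporting it; this field is precisely the $\nabla_x\phi$ determined by the Poisson equation in \eqref{eq:continuity0}, so
\[
\text{length}_{W_2}(\rho_{[t_i,t_f]}) = \int_{t_i}^{t_f}\|\nabla_x\phi(t,\cdot)\|_{\rho(t,\cdot)}\,\ud t.
\]
Applying Cauchy--Schwarz against the constant function $1$ on $[t_i,t_f]$ to the right-hand side and combining with \eqref{eq:precisedissipation} yields \eqref{eq:precisedissipation1}. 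Part (ii) follows because equality in Cauchy--Schwarz requires $\|\nabla_x\phi(t,\cdot)\|_{\rho(t,\cdot)}$ to be constant in $t$, which can always be arranged by a monotone time reparametrization of the path $\rho_{[t_i,t_f]}$; such a reparametrized flow is realized as a Fokker--Planck evolution by choosing the potential through $\nabla_x U = -\gamma\nabla_x\phi - k_BT\nabla_x\log\rho$, which is consistent because the right-hand side is already a gradient field.

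For (iii), minimizing $\text{length}_{W_2}(\rho_{[t_i,t_f]})$ over all continuous paths connecting $\rho_{t_i}$ and $\rho_{t_f}$ is by definition $W_2(\rho_{t_i},\rho_{t_f})$, with the minimum attained uniquely by McCann's displacement interpolation under the standing absolute-continuity hypothesis on the endpoints; the constant-speed reparametrization from (ii) applied to this geodesic then delivers \eqref{eq:precisedissipation2}. The principal obstacle is the careful integration by parts underlying the dissipation identity: boundary terms at infinity must vanish, which requires appropriate decay of $\rho$ and $\nabla_x U$ beyond the bare finite-second-moment hypothesis; in the non-smooth regime I would appeal to the differential calculus on $\mathcal P_2(\mR^d)$ developed by Ambrosio--Gigli--Savar\'e rather than the formal manipulation sketched above.
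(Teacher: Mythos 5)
Your proposal is correct and follows essentially the same route as the paper: derive the dissipation identity by differentiating the free energy along the Fokker--Planck flow with the identification $v=\nabla_x\phi=-\frac{1}{\gamma}(\nabla_x U+k_BT\nabla_x\log\rho)$, then obtain (i)--(iii) from the relation between the action integral and the $W_2$-length of the path (the paper leaves the Cauchy--Schwarz step implicit by arguing directly from the constant-speed case, but it is the same inequality). Your closing remarks on decay at infinity and on uniqueness via displacement interpolation merely make explicit hypotheses the paper invokes in passing.
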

	
	\begin{proof}
		We first derive \eqref{eq:precisedissipations}: consider  
		\begin{align*}
		\frac{\ud \cF}{\ud t}(\rho,U)=& \frac{\ud}{\ud t} \cE(\rho,U)  - T \frac{\ud}{\ud t}S(\rho)\\=& \frac{\ud}{\ud t} \int_{\mR^d}  U \rho\ud x +\kbt \frac{\ud}{\ud t} \int_{\mR^d} \rho \log \rho\;\ud x\\
		=& 
		\int_{\mR^d} \left(\frac{\partial{U}}{\partial{t}} \rho +U\frac{\partial{\rho}}{\partial{t}}\;+\kbt\frac{\partial{\rho}}{\partial{t}} \log \rho \right)\; \ud x\\
		=&~\int_{\mR^d}\frac{\partial{U}}{\partial{t}}\rho\;\ud x+\int_{\mR^d}\left(U+\kbt \log \rho\right)\frac{\partial{\rho}}{\partial{t}}\ud x.
		\end{align*}
		Using the Fokker-Planck equation~\eqref{eq: Fokker-Planck}, the second term
		\begin{align*}
		&\int_{\mR^d}  \left(U+\kbt \log \rho\right)\frac{1}{\gamma}\nabla_x\cdot \left[ (\nabla_x U +k_BT\nabla_x \log \rho)\rho\right] \ud x\\
		&=-\frac{1}{\gamma}\int_{\mR^d}\|\nabla_x U + \kbt\nabla_x \log\rho\|^2 \rho \;\ud x \\
		&=-\gamma\int_{\mR^d}\|v\|^2 \rho \;\ud x,
		\end{align*}
		where the first equality follows using integration by parts (under standard assumptions on the decay rate of $\rho$ at infinity), while the second equality is a re-write using 
		\begin{equation}\label{eq:v-def}
		v := -\frac{1}{\gamma}(\nabla_x U + \kbt\nabla_x \log\rho).
		\end{equation}
		Thus,
		\begin{align*}
		\frac{\ud \cF}{\ud t}(\rho,U)\!=\!\int_{\mR^d} \frac{\partial{U}}{\partial{t}}\rho\;\ud x-\gamma\int_{\mR^d}\|v\|^2 \rho\;\ud x.
		\end{align*}	
		Integrating over $[t_i,t_f]$ yields
		\begin{align}\label{eq:intermediary}
		\Delta \cF&= \cW-\gamma\int_{t_i}^{t_f} \int_{\mR^d}\|v\|^2 \rho\;\ud x\;\ud t,
		\end{align}	
		where $v$ is the gradient of $\phi = - \frac{1}{\gamma}(U + \kbt \log \rho)$ and satisfies the continuity equation~\eqref{eq:continuity0} as claimed.   This establishes  \eqref{eq:precisedissipations}.
		
		Statements i) and ii) follow from the fact that 
		the $W_2$-length of the path $\rho_{[t_i,t_f]}$ (i.e., as a curve in $\mathcal P_2$), is given by \eqref{eq:geodistance}. Specifically,
		provided  $
		\int_{\mR^d}\|v\|^2 \rho\;\ud x=\alpha^2$ remains constant along the path (i.e., for $t\in[t_i,t_f]$),
		\[
		\alpha= \frac{1}{t_f-t_i}\text{length}_{W_2}\left(\rho_{[{t_i},{t_f}]}\right)
		\]
		and \eqref{eq:geodequality} follows from \eqref{eq:intermediary}. 
		If on the other hand the kinetic energy varies with time, then the path $\rho(t,\cdot )$,  time-reparametrized by
		\[
		\tilde t(t):= \frac{\text{\rm length}_{W_2}\left(\rho_{[{t_i},{t}]}\right)}{\text{\rm length}_{W_2}\left(\rho_{[{t_i},{t_f}]}\right)}(t_f-t_i)+t_i
		\]
		will be traversed via a velocity field 
		\[\tilde v(\tilde t(t))=\frac{v(t)}{
			{\|v(t)\|_\rho}
			}\frac{\text{\rm length}_{W_2}\left(\rho_{[{t_i},{t_f}]}\right)}{t_f-t_i}.\]
		Knowing $\tilde v$, a new potential $\tilde U$ can be computed so that
		$\tilde v(\cdot,\tilde t)=\nabla_x\tilde U(\cdot, \tilde t)+k_BT\nabla_x\rho(\cdot, \tilde t)$.
		
		Finally, statement iii) follows by taking $\rho_{[t_i,t_f]}$ to be a geodesic.
	\end{proof}
	
	\begin{remark} Early work by Jordan etal.\ \cite{jordan1998variational}, pointing out that the gradient flow of the free energy in $W_2$ is the Fokker-Planck equation, set the stage for understanding the role of the Wasserstein geometry in quantifying dissipation. This was recognized in \cite{aurell2011optimal,aurell2012refined,seifert2012stochastic}
		and more recently developed in \cite{chen2019stochastic,dechant2019thermodynamic}.
	\end{remark}

\section{Cyclic operation of engines}\label{Sec-IV-cycle.tex}
\label{sec:cycle}
We consider two types of thermodynamic transitions, isothermal and adiabatic. The first corresponds to a situation where the system remains in contact with a heat bath of constant temperature $T$ while a time-varying potential steers its thermodynamic state $\rho(t,.)$ from an {\em initial} $\rho(t_i,\cdot)$ to a {\em final} $\rho(t_f,\cdot)$. The adiabatic transition amounts to abrupt changes in both, the temperature of the heat bath as well as the shape of the potential, that are fast enough not to have any measurable effect on the state $\rho(t,.)$ and, as a consequence, to the entropy of the ensemble. We evaluate next the energy and work budgets in the correspoinding actuation protocols.

\subsection{Isothermal transition}
We consider transition between states $\rho_i$ and $\rho_f$ for the ensemble modeled by \eqref{eq:overdamped-Langevin}, over a time interval $[t_i,t_f]$, under the time-varying potential $U(t, X_t)$ and in contact with a heat bath of temperature $T$. Using the relationship~\eqref{eq:precisedissipation} between work, free energy, and the dissipation, and the first law, we have the following identity relating thermodynamic quantities in isothermal transitions
\begin{subequations}\label{eq:isothermalbalance}
	\begin{align}\label{eq:balance}
\cW&=\Delta\cE -T \Delta \cS + \cW_{\rm irr}\\
\cQ&=T \Delta \cS - \cW_{\rm irr}\
	\end{align}
	with the {\em irreversible} $\cW_{\rm irr}$ that represents dissipation attaining its minimal value
	\begin{equation}\label{eq:minWirr}
	\frac{\gamma}{t_f-t_i}W_2(\rho_{t_i},\rho_{t_f})^2
	\end{equation}
\end{subequations}
by the choice of actuation $\nabla_x U(t,\cdot)$ in \eqref{eq:v-def} with $v$ the optimal velocity field minimizing dissipation in \eqref{eq:precisedissipations} (item iii) in Theorem \ref{thm1}). 

It is important to note that the minimizing $v$ can be obtained by solving a convex reformulation of \eqref{eq:precisedissipations} in terms of the density $\rho(t,\cdot)$ and the momentum field $\mathbf p(t,\cdot)=v(t,\cdot)\rho(t,\cdot)$, in the form
\begin{subequations}
\begin{align}\label{eq:BB}
&\min_{\mathbf p(t,\cdot),\rho(t,\cdot)} \int_{t_i}^{t_f} \int_{\mR^d} \frac{\|\mathbf p\|^2}{\rho}\ud x\ud t\\
&\mbox{ subject to } \frac{\partial \rho}{\partial t}+\nabla_x \cdot \mathbf p = 0\\
&\mbox{ and } \rho(t_i,\cdot),\rho(t_f,\cdot)\mbox{ specified.}
\end{align}
\end{subequations}
Then, $v=\mathbf p/\rho$, see
\cite[Section 4]{benamou2000computational} and \cite[p.\ 241]{villani2003topics}.

\subsection{Adiabatic transition}

We now consider transition between $\rho_i$ and $\rho_f$ for the ensemble modeled by  \eqref{eq:overdamped-Langevin}, over a time interval $[t_i,t_f]$, under abrupt changes in the potential $U(t, X_t)$ and the temperature $T$ of the heat bath. 

The transition takes place over an infinitesimally short time interval about time $t$ (with $t^-/t^+$ indicating the left/right limits, respectively).
Thus, the temperature $T$ of the heat bath jumps between values $T({t^-})$ and $T({t^+})$ while, at the same time, the controlling potential switches from $U({t^-},\cdot)$ to $U({t^+},\cdot)$.

The energy budget of the transition no longer contains irreversible losses, as the right hand side of~\eqref{eq:precisedissipation} vanishes. Moreoverm, the entropy of the ensemble remains constant because $\rho({t^+},\cdot)=\rho({t^-},\cdot)$. Thus, the work input into the system equal to change in internal energy,
\begin{subequations}\label{eq:adiabaticbalance}
	\begin{align}
 \cW &=\int_{\mR^d} (U(t^+,x)-U(t^-,x))\rho(t,x)\ud x
	 = \Delta \cE,
	\label{eq:work-adiabatic}
	\end{align} 
	and therefore no heat transfer takes place, and therefore,
	\begin{align}
\cQ=0. \label{eq:heat-adiabatic}
	\end{align}
\end{subequations}

\subsection{Finite-time Carnot cycle}

We are now in position to consider a complete {\em Carnot-like thermodynamic cycle} where the ensemble is steered between two states $\rho_a$ and $\rho_b$ during isothermal expansion (from $\rho_a$ to $\rho_b$) and contraction (from $\rho_b$ to $\rho_a$) phases, separated by adiabatic transitions. Periodic operation about such a scheduling is sought as a means to extract work from a heat bath. A schematic in Figure~\ref{fig:cycle} depicts the phases of the cyclic operation. These four phases are described in detail next. 

\begin{center}
	\begin{figure}[t]
		\includegraphics[width=.7\linewidth]{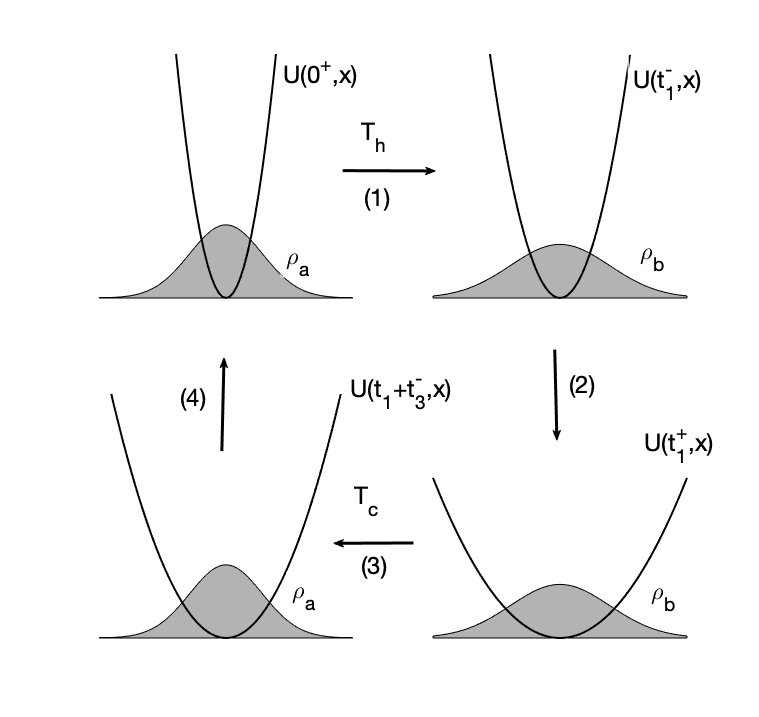}
		\centering
		\caption{
			Carnot-like cycle of a stochastic model for a heat engine (with $d=1$): the operation cycles clockwise through two isothermal transitions $(1)$ and $(3)$,  and two adiabatic transitions $(2)$ and $(4)$. During the isothermal transitions having duration $t_1$ and $t_3$, the ensemble is in contact with a ``hot'' reservoir of temperature $T_h$, and a ``cold'' one of temperature $T_c$, respectively. The adiabatic transitions are considered to be instantaneous, i.e., $t_2=t_4= 0$. The marginal densities are $\rho_a$ and $\rho_b$.}
		\label{fig:cycle}
	\end{figure}
\end{center}

\medskip
\noindent
{\bf 1)  Isothermal process in temperature $T_h$ (``hot''):} The first step is an isothermal expansion over the time interval $(0,t_1)$ in contact with a heat bath of temperature $T=T_h$. Change in the potential steers the ensemble from a starting state $\rho_a$ to a terminal state $\rho_b$. As in \eqref{eq:isothermalbalance},
\begin{subequations}\label{eq:phase1}
	\begin{align}
	\cW^{(1)}&=\Delta\cE^{(1)} -T_h \Delta \cS^{(1)}+\cW_{\rm irr}^{(1)}\\
	\cQ^{(1)} &=T_h \Delta \cS^{(1)}-\cW_{\rm irr}^{(1)}\label{eq:dQ1}
	\end{align}
	where the superscript enumerates the phase in the cycle, and the minimal work loss
	$\cW_{\rm irr}^{(1)}$ depends only on the end-point states as it equals
	\begin{equation}
	\cW_{\rm irr}^{(1)} = \frac{\gamma}{t_1}W_2(\rho_a,\rho_b)^2.
	\end{equation}
\end{subequations}

\medskip
\noindent
{\bf 2)  Adiabatic process:} The second phase of the cycle is an adiabatic transition at time $t=t_1$, over an infinitesimal interval (of duration``$t_2= 0$''), bringing the ensemble in contact with a heat bath of temperature $T_c$ (``cold''). As in \eqref{eq:adiabaticbalance},
\begin{subequations}\label{eq:phase2}
	\begin{align}
	\cW^{(2)}&=\Delta\cE^{(2)}\\
	\cQ^{(2)}&=0
	\end{align}
\end{subequations}
while the state remains at $\rho_b$.

\medskip
\noindent
{\bf 3)  Isothermal process in temperature $T_c$ (``cold''):} The third step is an Isothermal contraction over the time interval $(t_1, t_1+t_3)$ while in contact with a heat bath of temperature $T_c$. Actuation in the form of the time-varying potential causes the state of the ensemble to  return to $\rho_a$ back from starting at $\rho_b$. Once again, as in 
\eqref{eq:isothermalbalance},
\begin{subequations}\label{eq:phase3}
	\begin{align}
	\cW^{(3)}&=\Delta\cE^{(3)} -T_c \Delta \cS^{(3)}+\cW_{\rm irr}^{(3)}\\
	\cQ^{(3)} &=T_c \Delta \cS^{(3)}-\cW_{\rm irr}^{(3)}\\
	\cW_{\rm irr}^{(3)} &= \frac{\gamma}{t_3}W_2(\rho_a,\rho_b)^2.
	\end{align}
\end{subequations}

\medskip
\noindent
{\bf 4)  Adiabatic process:} Finally, an adiabatic transition over an interval of infinitesimal duration (``$t_4= 0$'') returns the ensemble to be in contact with a heat reservoir of temperature $T_h$ for a total period of the cycle $t_{\rm period}=t_1+t_3$. The state of the ensemble remains at $\rho_a$, to begin the cycle again. As before, in \eqref{eq:adiabaticbalance},
\begin{subequations}\label{eq:phase4}
	\begin{align}
	\cW^{(4)}&=\Delta\cE^{(4)}\\
	\cQ^{(4)}&=0
	\end{align}
\end{subequations}

\subsection{Thermodynamic efficiency \& power delivered}
For a cyclic process the total  change in internal  energy 
\[
\sum_{i=1}^{4} \Delta \cE^{(i)}=0.
\]
On the other hand, the entropy doesn't change during the adiabatic transitions
\[
\Delta \cS^{(i)}=0, \mbox{ for }i=2,4,
\]
while, since it depends only on the end-point states $\rho_a,\rho_b$,
\[
\Delta S^{(1)}=-\Delta S^{(3)}= \cS(\rho_b)-\cS(\rho_a)=:\Delta \cS.
\]
As a result, the total work output (negative of the work input) is
\begin{equation}
\begin{aligned}\label{eq:total-work-initial}
- \cW&=-\left(\sum_{i=1}^{4}\Delta \cE^{(i)}-\sum_{i=1}^{4}T_i \Delta \cS^{(i)}+\sum_{i=1}^{4} \cW_{irr}^{(i)}\right)\\
&=(T_h-T_c)\Delta \cS-\cW_{\rm irr}^{(1)}-\cW_{\rm irr}^{(3)}.
\end{aligned}
\end{equation}
Thus, assuming optimality of the choice of the potential to minimize $\cW_{\rm irr}$ in each transition,
we conclude that the total work output possible is
\begin{equation}
\begin{aligned}\label{eq:total-work-initial2}
 -\cW&=(T_h-T_c)\Delta \cS-\gamma (\frac{1}{t_1}+\frac{1}{t_3})W_2(\rho_a,\rho_b)^2.
\end{aligned}
\end{equation}
Since $T_h>T_c$, naturally, a necessary condition for positive work output is that \[\Delta \cS := \cS(\rho_b)-\cS(\rho_a)>0\] 
which dictates that phase 1 is an isothermal expansion and phase 3, an isothermal contraction.\footnote{The opposite would be true if we sought to operate the cycle for refrigeration purposes.}

The thermodynamic efficiency of an engine is the ratio of work extracted over the heat dissipated, 
\begin{equation}\label{eq:efficiency}
\eta = \frac{-\cW}{\phantom{.}\mathcal Q_h}
\end{equation}
where the heat input during isothermal expansion, from \eqref{eq:dQ1}, is
\[
\cQ_h= \Delta \cQ^{(1)}= T_h\Delta \cS-\cW_{\rm irr}.
\]
Once again assuming optimality ($\cW_{\rm irr}=\frac{\gamma}{t_1}W_2(\rho_a,\rho_b)^2$), the bound on the efficiency is seen to be
\begin{equation}
\begin{aligned}\label{eq:boundefficiency}
\eta = \frac{(T_h-T_c)\Delta \cS-\gamma (\frac{1}{t_1}+\frac{1}{t_3})W_2(\rho_a,\rho_b)^2}{T_h\Delta \cS-\gamma \frac{1}{t_1}W_2(\rho_a,\rho_b)^2}
\end{aligned}
\end{equation}
When the period of the cyclic process tends to infinity (and hence, $t_1,t_3\to\infty$), tends to the Carnot limit for quasistatic (infinitely slow) transitions
\[
\eta_{\rm C}=1-\frac{T_c}{T_h}.
\]

Periodic operation, over a finite period $t_1+t_3$ (since $t_2=t_4=0$), delivers
\begin{align}\nonumber
P &= -\cW/(t_1+t_3)\\
&=\frac{(T_h-T_c)\Delta \cS-\gamma (\frac{1}{t_1}+\frac{1}{t_3})W_2(\rho_a,\rho_b)^2}{t_1+t_3}
\label{eq:power}
\end{align}
units of power. Note that the power output is zero when Carnot efficiency is achieved, because the total duration $t_1+t_3 \to \infty$.  In the sequel, we focus on assessing bounds on available power.

\section{Fundamental limits to power}\label{Sec-V-power.tex}

Our main interest is in assessing the maximal amount of power that can be drawn by a thermodynamic engine operating between heat baths with temperatures $T_h$ and $T_c<T_h$, i.e.,``hot'' and ``cold'', respectively. 
In the present work we draw conclusions based on the basic model in \eqref{eq:overdamped-Langevin} via analysis of the thermodynamic cycle that was presented in Section \ref{sec:cycle}.

Consider the expression in \eqref{eq:power} for the power that can be drawn via a cyclic operation as discussed. Preparation of the ensemble, and actuation during the cycle, allow a number of choices. Specifically, the power depends on the period $t_1+t_3$, the times of the two isothermal phases $t_1,t_3$ individually, as well as the end-point states (distributions) $\rho_a,\rho_b$. The latter choice impacts both, the Wasserstein distance $W_2(\rho_a,\rho_b)$ as well as the change in entropy $\Delta\cS$. We will explore systematically the various options.

\subsection{Optimizing the time scheduling}\label{sec:51}
Optimizing the maximal power delivered during cyclic operation
\begin{align}\nonumber
P&=\frac{1}{t_1+t_3}(T_h-T_c)\Delta \cS-\frac{\gamma}{t_1t_3}W_2(\rho_a,\rho_b)^2,
\end{align}
with respect to choices for $t_1,t_3$, with $W_2(\rho_a,\rho_b)$, $T_h,T_c$ and $\Delta \cS$ kept fixed, gives that
\begin{equation}\label{eq:split}
t_1=t_3=\frac{4\gamma W_2(\rho_a,\rho_b)^2}{(T_h-T_c)\Delta \cS},
\end{equation}
and therefore that the period for the cycle is
\begin{align}\nonumber
t_{\rm cycle}&:=t_1+t_3\\\label{eq:t1t3free}
&=\frac{8\gamma W_2(\rho_a,\rho_b)^2}{(T_h-T_c)\Delta \cS}.
\end{align}

If instead we specify the period of the cycle $t_{\rm cycle}$, and optimize with respect to the breakdown between $t_1$ and $t_3$, we once again obtain that the durations of the two phases are equal
\begin{equation}\label{eq:halfs}
t_1=t_3=\frac{t_{\rm cycle}}{2}.
\end{equation}
\begin{remark}[Efficiency at maximum power] The thermodynamic efficiency~\eqref{eq:efficiency} of the engine, when it is operating at optimal transition times~\eqref{eq:split} that maximize the power,  is equal to
	\begin{equation}\label{eq:eta-max-power}
	\eta_{SS} = \frac{2(T_h-T_c)}{3T_h+T_c} = \frac{\eta_C}{2-\frac{\eta_C}{2}}
	\end{equation}
	The result~\eqref{eq:eta-max-power} first appeared in~\cite{seifert2008stochastic}, for special setting when the two marginal distributions $\rho_a$ and $\rho_b$ are Gaussians, and the potential $U(t,x)$ is a quadratic function of $x$. Our derivation verifies the result~\eqref{eq:eta-max-power} in a general setting.
\end{remark}

Using the expression~\eqref{eq:t1t3free}, the total power delivered
\begin{equation}
P = \frac{(T_h-T_c)^2}{16\gamma } \left(\frac{\Delta \cS}{W_2(\rho_a,\rho_b)}\right)^2.
\end{equation}
But as we will see in Section \ref{sec:caveat}, optimizing the power for $\rho_a,\rho_b$ leads to the non-physical conclusion of a vanishingly small $t_{\rm cycle}$ .

\subsection{The caveat of optimal $t_{\rm cycle}$: Gaussian states $\rho_a,\rho_b$}\label{sec:caveat}

The case where the two marginal distributions/states are Gaussian allows for closed-form expressions for $\Delta\cS$ and their Wasserstein distance. Indeed, if  $\rho_a,\rho_b$ are Gaussian distributions with zero mean and variances $\Sigma_a,\Sigma_b$, respectively, then
\begin{subequations}\label{eq:gaussianformulae}
	\begin{align}\label{eq:W2-Gaussian}
	W_2^2(\rho_a,\rho_b)&=\tr\left(\Sigma_a+\Sigma_b-2(\Sigma_{a}^{1/2}\Sigma_b\Sigma_a^{1/2})^{1/2}\right),\\\nonumber
	\Delta\cS&=\cS(\rho_b)-\cS(\rho_a)\\\label{eq:entropyGaussian}
	&=\frac12k_B \log \det (\Sigma_b\Sigma_a^{-1}).
	\end{align}
\end{subequations}
Evidently, these allow deriving explicit expressions for the available power in terms of the respective variances.

Specializing to the case of scalar processes with $\sigma_i$ ($i\in\{a,b\}$) the corresponding standard deviation, i.e., $\Sigma_i=\sigma_i^2$, and period $t_{\rm cycle}$ for the thermodynamic cycle as in \eqref{eq:t1t3free}, we obtain that the maximal power available, as a function of $\sigma_a$ and $\sigma_b$, is given by
\begin{equation}
\label{eq.maxpwrtt}
P(\sigma_a,\sigma_b)=\frac{k_B^2(T_h-T_c)^2}{16\gamma }\left(\frac{\log \frac{\sigma_b}{\sigma_a}}{\sigma_b-\sigma_a}\right)^2.
\end{equation}
The corresponding heat uptake from the hot reservior and the work extracted during one cycle are computed as
\begin{align*}
\cQ^{(1)}=\cQ_{h}=\frac{1}{4}k_B(3T_h+T_c)\log \frac{\sigma_b}{\sigma_a}
\end{align*}
and
\begin{align*}
-\cW=\frac{1}{2}k_B(T_h-T_c)\log \frac{\sigma_b}{\sigma_a},
\end{align*}
respectively.

The maximum of the power $P(\sigma_a,\sigma_b)$ over either $\sigma_a$, or $\sigma_b$, takes place when 
\[
\sigma_a=\sigma_b.
\]
But at this limiting condition, although
\begin{subequations}
	\begin{equation}\label{eq:max-power-Gaussian}
	\begin{aligned}
	\max_{\sigma_b}P(\sigma_a,\sigma_b)&=\frac{k_B^2(T_h-T_c)^2}{16\gamma \sigma_a^2}\end{aligned}
	\end{equation}
	and the rate with which heat is drawn is
	\begin{equation*}
	\begin{aligned}\lim_{\sigma_b\to\sigma_a}\frac{\cQ_h}{t_{\rm cycle}}&= \frac{k_B^2(3T_h+T_c)(T_h-T_c)}{32\gamma \sigma_a^2},
	\end{aligned}
	\end{equation*}
	the limiting values of
	$-\Delta \cW$, $\cQ_{h}$ over a cycle vanish, as does the period $t_{\rm cycle}$ of the cycle.
\end{subequations}
Thus we are led to a non-physical situation of a vanishingly small period for the thermodynamic cycle.

A similar issue in the context of power in quantum engines is brought up in \cite{esposito2010quantum}. In the setting herein, in addition, it is seen that taking
\[
\sigma_a\to 0
\]
and operating with a vanishingly small period for the cycle, leads to infinite power. Once again, bringing up a non-practical situation that is questionable on physical grounds. In the sequel we focus on $t_{\rm cycle}$ being finite.

\subsection{Optimizing the thermodynamic state $\rho_b$}\label{sectionVC}

Henceforth we fix the period $t_{\rm cycle}$ as well as the duration of the isothermal phases according to \eqref{eq:halfs}. The power delivered, as a function of the $\rho_i$'s ($i\in\{a,b\}$), is
\begin{equation}\label{Pbar}
\frac{(T_h-T_c)}{t_{\rm cycle}} (\cS(\rho_b)-\cS(\rho_a))- \frac{4\gamma }{t_{\rm cycle}^2}W_2(\rho_a,\rho_b)^2.
\end{equation}

We now consider the problem to maximize power over choice of $\rho_b$, with $\rho_a$ specified. This problem reduces to finding a suitable minimizer of
\begin{align}\label{eq:findrhob}
\min_{\rho_b} \{W_2(\rho_a,\rho_b)^2 - h\cS(\rho_b),\}
\end{align}
for
\[
h =\frac{t_{\rm cycle}(T_h-T_c)}{4\gamma }.
\]

Throughout we assume that states have finite second-order moments. As noted earlier, the space of probability distributions (measures, in general) with finite second-order moments $\mathcal P_2(\mathbb R^d)$ is metrized by the Wasserstein metric $W_2(\cdot,\cdot)$ and, as can easily be verified, the expression
\begin{equation}\label{eq:jko}
W_2(\rho_a,\rho_b)^2-h\cS(\rho_b)
\end{equation}
is strictly convex, which leads to the following statement.

\begin{proposition}\label{prop:prop1} Assuming that $T_h,T_c$ as well as $t_{\rm cycle}$ and an initial state $\rho_a\in\mathcal P_2(\mathbb R^d)$ are specified, there exists a unique minimizer $\rho_b$ of \eqref{eq:findrhob}.
\end{proposition}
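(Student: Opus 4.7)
The plan is to establish uniqueness first, leveraging the strict convexity asserted in the excerpt, and then to prove existence by the direct method of the calculus of variations applied to this JKO-type functional
\[
J(\rho_b) := W_2(\rho_a,\rho_b)^2 - h\,\cS(\rho_b) = W_2(\rho_a,\rho_b)^2 + h\,k_B \int_{\mR^d} \rho_b \log \rho_b \,\ud x.
\]
For uniqueness, I would verify strict convexity with respect to the linear interpolation $\rho_b^s := (1-s)\rho_b^{(0)} + s\,\rho_b^{(1)}$. The entropy portion inherits strict convexity from the strict convexity of $t \mapsto t\log t$. For the Wasserstein portion, with $\pi^{(0)},\pi^{(1)}$ optimal couplings between $\rho_a$ and $\rho_b^{(0)},\rho_b^{(1)}$ respectively, the convex combination $(1-s)\pi^{(0)} + s\,\pi^{(1)}$ is a (generally suboptimal) coupling between $\rho_a$ and $\rho_b^s$, yielding $W_2^2(\rho_a,\rho_b^s) \le (1-s)W_2^2(\rho_a,\rho_b^{(0)}) + s\,W_2^2(\rho_a,\rho_b^{(1)})$. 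Summing gives strict convexity of $J$, so the minimizer, if one exists, must be unique.

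For existence, I would apply the direct method. The first step is a lower bound and coercivity estimate. From $\|x\|^2 \le 2\|x-y\|^2 + 2\|y\|^2$ and the optimal coupling, $\int_{\mR^d} \|x\|^2 \rho_b\,\ud x \le 2\,W_2^2(\rho_a,\rho_b) + 2\,m_2(\rho_a)$, where $m_2(\rho_a) := \int \|y\|^2 \rho_a\,\ud y < \infty$ by hypothesis. Since the Gaussian maximizes differential entropy at fixed second moment, $\cS(\rho_b) \le \tfrac{d k_B}{2}\log\!\bigl(2\pi e\, m_2(\rho_b)/d\bigr)$. Chaining these inequalities, there exist constants $c_1,c_2>0$ depending only on $\rho_a$ such that
\[
J(\rho_b) \ge W_2(\rho_a,\rho_b)^2 - \tfrac{h d k_B}{2}\log\!\bigl(c_1 + c_2\,W_2(\rho_a,\rho_b)^2\bigr),
\]
so $J$ is bounded below and tends to $+\infty$ as $W_2(\rho_a,\rho_b)\to\infty$.

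Next, I take a minimizing sequence $\{\rho_b^{(n)}\}$. Coercivity forces $\sup_n W_2(\rho_a,\rho_b^{(n)}) < \infty$, hence uniformly bounded second moments, so by Prokhorov's theorem a subsequence converges narrowly to some $\rho_b^\star$, with uniform second-moment control placing $\rho_b^\star \in \mathcal{P}_2(\mR^d)$. Both $\rho \mapsto W_2(\rho_a,\rho)^2$ and $\rho \mapsto \int \rho\log\rho\,\ud x$ are narrowly lower semicontinuous (Villani, \emph{Topics in Optimal Transportation}, for the Wasserstein squared distance and for the Boltzmann functional). Hence $J(\rho_b^\star) \le \liminf_n J(\rho_b^{(n)}) = \inf J$, and $\rho_b^\star$ is the unique minimizer.

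The main technical obstacle is the coercivity step: because $-\cS(\rho_b)$ can tend to $-\infty$ as $\rho_b$ spreads out, one must verify that the quadratic $W_2^2$ penalty dominates the logarithmic growth of the entropy. The Gaussian-maximizer inequality for differential entropy, combined with the second-moment bound obtained from the optimal coupling, handles this issue cleanly and keeps the minimizing sequence confined to a narrowly relatively compact family in $\mathcal{P}_2(\mR^d)$.
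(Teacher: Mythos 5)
Your argument is correct, and it is essentially the argument that the paper outsources rather than writes out: the paper's proof consists of asserting strict convexity and then citing \cite[Proposition 4.1]{jordan1998variational} for the ``rather extensive and technical argument'' establishing existence; what you have written is precisely that JKO-style argument (moment bound from the optimal coupling, the Gaussian max-entropy inequality to show the quadratic $W_2^2$ term dominates the logarithmic entropy defect, tightness and Prokhorov, then lower semicontinuity), so the two proofs agree in substance, with yours being self-contained. One point to tighten: the claim that $\rho\mapsto\int\rho\log\rho\,\ud x$ is narrowly lower semicontinuous is \emph{false} without qualification --- by spreading a vanishing fraction of mass over an enormous region one can drive $\int\rho\log\rho$ to $-\infty$ along a narrowly convergent sequence --- and the correct statement is that lower semicontinuity holds along sequences with uniformly bounded second moments, because the negative part of $\rho\log\rho$ is then controlled by the moments. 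You do have the uniform second-moment bound in hand from coercivity, so the argument closes, but the lemma should be invoked in that conditional form (as in the proof of \cite[Proposition 4.1]{jordan1998variational}) rather than as an unconditional property. Your uniqueness step via convexity of $W_2^2(\rho_a,\cdot)$ along linear interpolation (mixing the optimal couplings) together with strict convexity of $t\mapsto t\log t$ is exactly the standard route and is fine.
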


\begin{proof} 
	Equation \eqref{eq:findrhob} is similar to one step in the so-called JKO-scheme (also, proximal projection) that displays the heat equation as the gradient flow of the Shannon entropy \cite{jordan1998variational}. 
	While $W_2(\rho_a,\rho_b)^2-h\cS(\rho_b)$ is strictly convex, it is not automatically bounded from below. Thus, a rather extensive and technical argument is needed to show existence and uniqueness of a minimizer. This is detailed in \cite[Proposition 4.1]{jordan1998variational}.
\end{proof}

We conclude this section with two statements. The first establishes implicit conditions for optimality of $\rho_b$, in maximizing the expression in \eqref{Pbar} (equivalently, minimizing \eqref{eq:jko}). For ease of referencing we view the expression in \eqref{Pbar} as a function of $\rho_b$, namely,
\begin{align}\label{eq:f}
f(\rho_b):=\frac{(T_h-T_c)}{t_{\rm cycle}} (\cS(\rho_b)-\cS(\rho_a))- \frac{4\gamma}{t_{\rm cycle}^2}W_2(\rho_a,\rho_b)^2.
\end{align}
The following lemma provides stationarity conditions for $f(\rho_b)$ that, albeit, are implicit in that they involve the optimal transport map from $\rho_a$ and $\rho_b$ that minimizes quadratic transportation cost \cite[Ch. 5]{villani2003topics}. 

The theory of optimal transport provides that, since $\rho_a,\rho_b$ are densities (as opposed to measures), the support of $\Pi$ in \eqref{eq:W2}
coincides with the graph of a map 
\[
\Psi:\mR^d\to\mR^d\,:\,x\mapsto y,
\]
which in fact is the gradient  of a convex function $\psi$ on $\mathbb R^d$ \cite[Ch. 5]{villani2003topics}, i.e., $\Psi=\nabla \psi$. This is the (unique) optimal transport map for the so-called {\em Monge problem} to minimize  $\int_{\mR^d} \|x-\Psi(x)\|^2 \rho_a(x)\ud x$ over all maps that transfer mass from $\rho_a$ to $\rho_b$.  The transferance of the ``mass'' distribution $\rho_a$ into $\rho_b$ is indicated by
\[
\nabla \psi \sharp \rho_a = \rho_b,
\]
which is a compact notation for the change of variables formula
\[
\det(\nabla^2\psi(x))\rho_b(\nabla\psi(x))=\rho_a(x).
\]
We first highlight stationarity conditions that characterize the minimizer of $f(\cdot)$ in \eqref{eq:f}.

\begin{lemma} \label{prop:opt-pb} Consider two probability densities $\rho_a,\rho_b^*$ in $\mathcal P_2(\mathbb R^d)$, where $\rho_b^{*}$ is the unique maximizer of $f(\rho_b)$, and let $\nabla \phi$, for a convex function $\psi$ on $\mathbb R^d$, be such that
	$\nabla \psi\sharp \rho_a =\rho_b^*$.
	The following (stationarity) condition holds
	\begin{equation}
	\label{optimalrhob}
	k_B(T_h-T_c)\nabla \log \rho_b^{*}(y)-\frac{8\gamma }{t_{\rm cycle}}\left((\nabla \psi)^{-1}-{\rm Id}\right)(y)=0,
	\end{equation}
	where $\rm Id$ denotes the identity map.
\end{lemma}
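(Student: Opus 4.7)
The strategy is to derive the displayed condition as a first-order optimality (Euler--Lagrange) condition in the Wasserstein geometry introduced in Section~\ref{Sec-III-Second-law.tex}. Since Proposition~\ref{prop:prop1} already guarantees existence and uniqueness of the minimizer of \eqref{eq:findrhob} (equivalently, the maximizer $\rho_b^*$ of $f$), it suffices to compute the first variation of $f$ along admissible perturbations and set it to zero.

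First, I would parametrize smooth perturbations of $\rho_b^*$ by a one-parameter family $\epsilon\mapsto \rho_\epsilon$ with $\rho_0=\rho_b^*$ that solves a continuity equation $\partial_\epsilon \rho_\epsilon + \nabla_y\cdot(\rho_\epsilon\, v)=0$ for a gradient velocity field $v=\nabla_y\phi$ (test function). This is precisely the parametrization of the tangent space of $\mathcal{P}_2(\mR^d)$ used in \eqref{eq:innerproduct}. Then I would compute the variations of the two pieces of $f$ separately. For the entropy term, a direct integration by parts (which reuses the same manipulation performed in the proof of Theorem~\ref{thm1}) gives
\[
\frac{d}{d\epsilon}\Big|_{\epsilon=0}\cS(\rho_\epsilon) \;=\; -k_B\int_{\mR^d} \langle v,\nabla_y\log\rho_b^*\rangle\,\rho_b^*\,\ud y.
\]
For the Wasserstein term I would invoke the classical Otto calculus identity
\[
\frac{d}{d\epsilon}\Big|_{\epsilon=0}\tfrac{1}{2}W_2^2(\rho_a,\rho_\epsilon) \;=\; \int_{\mR^d} \langle v,\,{\rm Id}-T_{\rho_b^*\to\rho_a}\rangle\,\rho_b^*\,\ud y,
\]
where $T_{\rho_b^*\to\rho_a}$ is the (unique) optimal transport map from $\rho_b^*$ to $\rho_a$. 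Since by hypothesis $\nabla\psi\sharp\rho_a=\rho_b^*$ and $\psi$ is convex, Brenier's theorem identifies $T_{\rho_b^*\to\rho_a}=(\nabla\psi)^{-1}$ (the gradient of the Legendre dual $\psi^*$).

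Combining these two variations, setting $\frac{d}{d\epsilon}|_{\epsilon=0}f(\rho_\epsilon)=0$ for all admissible $v=\nabla_y\phi$, and substituting $h=t_{\rm cycle}(T_h-T_c)/(4\gamma)$, I obtain
\[
\int_{\mR^d}\Big\langle \nabla_y\phi,\; k_B(T_h-T_c)\nabla_y\log\rho_b^*(y)-\frac{8\gamma}{t_{\rm cycle}}\big((\nabla\psi)^{-1}(y)-y\big)\Big\rangle\,\rho_b^*(y)\,\ud y \;=\; 0.
\]
Because this must hold for every test potential $\phi$, and because both summands inside the brackets are themselves gradient vector fields (the second one equals $\nabla_y(|y|^2/2-\psi^*(y))$), the uniqueness of the gradient representative of the Wasserstein differential forces the bracket to vanish pointwise $\rho_b^*$-a.e., which is exactly \eqref{optimalrhob}.

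The main technical step is the variation formula for $W_2^2(\rho_a,\cdot)$: establishing it rigorously requires a nontrivial appeal to the differential structure of $\mathcal{P}_2(\mR^d)$ (Brenier's theorem and the chain rule for absolutely continuous curves, as in Villani~\cite{villani2003topics} Ch.~8 and Ambrosio--Gigli--Savar\'e~\cite{ambrosio2008gradient}). The entropy variation and the final passage from the integral identity to the pointwise equation are standard once the Wasserstein gradient machinery is in place.
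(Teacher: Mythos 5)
Your proposal is correct and follows the same overall skeleton as the paper's proof: perturb $\rho_b^*$ along a flow, compute the first variation of the entropy and of the squared Wasserstein distance, and set the sum to zero. The entropy computation is identical. The difference lies in how the $W_2^2$ term is treated. You invoke, as a black box, the exact differentiability of $\rho\mapsto \tfrac12 W_2^2(\rho_a,\rho)$ with Wasserstein gradient $\mathrm{Id}-(\nabla\psi)^{-1}$ (Brenier plus the chain rule for absolutely continuous curves). The paper avoids this heavier machinery: it only establishes the \emph{one-sided} bound
\begin{equation*}
W_2(\rho_a,\rho_s)^2 - W_2(\rho_a,\rho_b^*)^2 \;\leq\; \int_{\mR^d}\|\nabla\psi^{-1}(x)-\Psi_s(x)\|^2\rho_b^*(x)\,\ud x - \int_{\mR^d}\|\nabla\psi^{-1}(x)-x\|^2\rho_b^*(x)\,\ud x,
\end{equation*}
obtained by feeding the transported coupling into the infimum in \eqref{eq:W2}, which yields an inequality for the directional derivative of $f$; the maximality of $\rho_b^*$ forces that directional derivative to be nonpositive, and the symmetry $\xi\to-\xi$ then upgrades the inequality to the integral identity. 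A second, smaller difference: the paper tests against arbitrary $\xi\in C_0^\infty(\mR^d,\mR^d)$, so the integral identity gives $D_f=0$ pointwise ($\rho_b^*$-a.e.) immediately, whereas you restrict to gradient fields $v=\nabla\phi$ and must add the observation (which you do) that $D_f$ is itself a gradient, so that orthogonality to all gradient test fields in $L^2(\rho_b^*)$ forces it to vanish. In short, your route is valid but leans on the full Otto-calculus derivative formula for $W_2^2$, while the paper's suboptimal-coupling-plus-sign-flip argument is more elementary and self-contained; your version is the one that generalizes more readily if one wants the actual gradient of $f$ rather than just the stationarity condition.
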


\begin{proof}The proof is given in Appendix \ref{apdx:opt-pb}. \end{proof}

The lemma, which is of independent interest, is used in the proof of the following proposition which concludes the section.
The proposition states that, for scalar distributions for simplicity, if $\rho_a$ is Gaussian, then so is $\rho_b$. As a consequence the optimal actuation protocol is based on a time-varying potential $U(t,x)$ that is quadratic in $x$. 
\begin{proposition}	
	\label{prop:optgau}
	If $\rho_a$ is a one-dimensional Gaussian distribution with zero mean and variance $\sigma_a^2$, then $\rho_b^{*}$ is also Gaussian with zero mean and variance $\sigma_b^2$, where
	\begin{align}
	\sigma_b&=\frac{1+\sqrt{1+c}}{2}\sigma_a,	\label{eq:opt-sigmab}
	\end{align}
	and $c=\frac{k_B(T_h-T_c)t_{\rm cycle}}{2 \gamma \sigma_a^2}$.                    
\end{proposition}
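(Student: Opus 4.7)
The plan is to combine the stationarity condition \eqref{optimalrhob} from Lemma \ref{prop:opt-pb} with the uniqueness assertion of Proposition \ref{prop:prop1} under a zero-mean Gaussian ansatz for $\rho_b^{*}$. Assuming $\rho_b^{*}=N(0,\sigma_b^2)$, I would evaluate both sides of \eqref{optimalrhob} in closed form, solve the resulting algebraic relation for $\sigma_b$, and then use uniqueness to conclude that this ansatz actually furnishes the global maximizer of $f$.

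The key computation is that in one dimension the optimal transport pushing $\rho_a=N(0,\sigma_a^2)$ to $\rho_b=N(0,\sigma_b^2)$ is the linear rescaling $\nabla\psi(x)=(\sigma_b/\sigma_a)\,x$, arising from the convex potential $\psi(x)=\sigma_b x^2/(2\sigma_a)$, so that $(\nabla\psi)^{-1}(y)=(\sigma_a/\sigma_b)\,y$. Combined with $\nabla\log\rho_b^{*}(y)=-y/\sigma_b^2$, equation \eqref{optimalrhob} becomes linear in $y$, and vanishing of its coefficient yields
\begin{equation*}
-\frac{k_B(T_h-T_c)}{\sigma_b^2}\;-\;\frac{8\gamma}{t_{\rm cycle}}\!\left(\frac{\sigma_a}{\sigma_b}-1\right)=0,
\end{equation*}
which rearranges to $\sigma_b(\sigma_b-\sigma_a)=c\sigma_a^2/4$ with $c$ as in the statement. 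Writing $r:=\sigma_b/\sigma_a$, this is the quadratic $r^2-r-c/4=0$, whose unique positive root is $r=(1+\sqrt{1+c})/2$, matching \eqref{eq:opt-sigmab}. Since $c>0$ (from $T_h>T_c$), one automatically gets $r>1$, hence $\sigma_b>\sigma_a$ and $\Delta\cS>0$, consistent with the cycle delivering positive work.

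The last task is to upgrade this Gaussian candidate to the actual maximizer. Proposition \ref{prop:prop1} provides uniqueness of $\rho_b^{*}$, and Lemma \ref{prop:opt-pb} shows that $\rho_b^{*}$ must satisfy \eqref{optimalrhob}. Strict convexity of the functional \eqref{eq:jko} on $\mathcal P_2(\mR^d)$ (with its Wasserstein geodesic structure) promotes this first-order condition from necessary to sufficient, so the Gaussian we constructed, satisfying \eqref{optimalrhob}, must be $\rho_b^{*}$. The main obstacle is precisely this sufficiency step: making rigorous that stationarity in the Wasserstein sense together with strict geodesic convexity isolates the unique global minimum. A safer back-up route would be to restrict $f$ to the one-parameter family $\{N(0,\sigma_b^2)\}_{\sigma_b>0}$ — using \eqref{eq:W2-Gaussian} and \eqref{eq:entropyGaussian} to reduce $f$ to a strictly concave function of $\sigma_b$, whose first-order condition gives the same relation — and then combine this with the fact that \eqref{optimalrhob} already pins down a unique density in $\mathcal P_2(\mR)$ by Proposition \ref{prop:prop1}.
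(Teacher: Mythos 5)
Your proposal follows essentially the same route as the paper: plug the Gaussian ansatz and the linear transport map $(\nabla\psi)^{-1}(y)=(\sigma_a/\sigma_b)y$ into the stationarity condition \eqref{optimalrhob}, reduce to the quadratic $r^2-r-c/4=0$ for $r=\sigma_b/\sigma_a$, and invoke the uniqueness from Proposition~\ref{prop:prop1} to identify this stationary Gaussian with the maximizer. The sufficiency step you flag is indeed the only delicate point, and the paper handles it exactly as you do — by appealing to uniqueness together with the strict (displacement) convexity of \eqref{eq:jko} — so your attempt is correct and matches the paper's argument.
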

\begin{proof}The proof is given in Appendix \ref{apdx:optgau}. \end{proof}

\begin{remark}
	In earlier works, it is commonly assumed that the marginal distributions $\rho_a$, $\rho_b$ are Gaussian and the potential function $U(t,x)$ is quadratic in $x$. 
	Proposition~\ref{prop:optgau} justifies this assumption to some extent: if $\rho_a$ is specified to be Gaussian, the optimal $\rho_b$ and the optimal potential function that achieve the maximum power,  are Gaussian and quadratic, respectively. However, as we will see in Section~\ref{sec:opt-pa}, if instead $\rho_b$ is specified as Gaussian distribution, the optimal $\rho_a$ is not Gaussian. Gaussian distributions turn out instead to be local {\em minimizers} of the power under certain conditions (see discussion following Remark~\ref{rem:pa-minimizer}).
\end{remark}
\subsection{Optimizing the thermodynamic state $\rho_a$}\label{sec:opt-pa}
We now consider the dependence of the maximal power on $\rho_a$, i.e., on the thermodynamic state at which the ensemble begins its expansive phase. As we will see, the situation is not symmetric to the conclusions drawn in Section \ref{sectionVC} with regard to $\rho_b$ and, without further assumptions, an optimal $\rho_a$ does not exist. Interestingly, on closer inspection, the source of this conundrum is the unreasonably high demands on the magnitude of $\nabla_xU$ for the controlling potential $U(t,x)$.
The insights gained lead to the framework for maximal power in the follow up section.

For simplicity, and without any loss of generality for the purposes of this section, we assume that $\rho_b$ is specified to be a zero-mean Gaussian distribution with standard deviation $\sigma_b$. In view of \eqref{Pbar}, a choice of $\rho_a$ that is close to a Dirac delta distribution allows arbitrarily large negative values for the entropy, i.e., $\cS(\rho_a)\simeq - \infty$, and hence infinite power.

Thus, it is natural to impose a lower bound on the entropy of $\rho_a$, or simply fix
\[
-\infty < s_a=\cS(\rho_a) <\cS(\rho_b).
\]
But in this case, and once more in view of \eqref{Pbar}, maximal power would be drawn by minimizing $W_2(\rho_a,\rho_b)$ over probability densities $\rho_a$ with entropy $s_a$. We claim that
\begin{equation}\label{eq:infW2ra}
\inf_{\rho_a}\{W_2(\rho_a,\rho_b)\mid \cS(\rho_a)=s_a>-\infty\} = 0.
\end{equation}
To see this note that
\[
\inf_{\rho_a}W_2(\rho_a,\rho_b) = 0
\]
by taking $\rho_a$ to approximate an increasingly fine train of suitably scaled Dirac deltas, i.e., 
\[
\rho_a(x)\approx \sum_{i\in \mathbb Z} \rho_i\delta_{x_i}(x)
\]
where $\rho_i=\int_{x_i}^{x_{i+1}}\rho_b(x)\ud x$ and $x_i$ ($i\in \mathbb Z$) equispaced. The latter is a singular distribution which, however, can be approximated arbitrarily closely in $W_2$ by a probability density with any given entropy.
Such a density can be produced by approximating Dirac deltas by a piecewise constant function with finite support. 

The optimization problem~\eqref{eq:infW2ra} is inherently related to the continuity of the entropy functional with respect to the Wasserstein distance. For a rigorous treatment of the problem, see~\cite{polyanskiy2016wasserstein}, where it is shown that unless certain regularity assumptions are in place for $\rho_a$ and $\rho_b$, the infimum in~\eqref{eq:infW2ra} is zero.

\begin{remark}[Gaussian is not optimal for $\rho_a$]
\label{rem:pa-minimizer}
The preceeding arguments show that  a Gaussian distribution is not the optimal choice for $\rho_a$ with respect to maximizing power, even when $\rho_b$ is Gaussian, unless additional constraints are introduced. 
\end{remark}

Since the Gaussian distribution maximizes entropy when mean and variance are specified,  it is natural to explore constraints on the mean and variance of $\rho_a$ for the purposes of maximizing power. Without loss of generality, the mean can be assumed to be zero and the variance specified to be $\sigma_a^2<\sigma_b^2$. First-order and second order optimality analysis for the power output~\eqref{Pbar}, at $\rho_a=N(0,\sigma_a^2)$ is carried out. It turns out that, although $N(0,\sigma_a^2)$ satisfies the first-order optimality condition, it does not satisfy the second-order optimality condition.  In fact, $N(0,\sigma_a^2)$  is a local minimizer when $\sigma_a<\sigma_b< k_B (T_h-T_c) t_{\rm cycle}/(8\gamma\sigma_a)$. The analysis is given in Appendix~\ref{apdx:pa-minimizer}, and aims to highlight that the conjecture that a Gaussian $\rho_a$ is optimal fails.

\subsection{Maximum power with arbitrary potential}\label{sec:55}
In this section, we show that the power output of a thermodynamic engine, under any choice of potential $U(t,x)$ cannot exceed a bound that involves the Fisher information of  the marginal state $\rho_a$. The Fisher information 
is defined as
\begin{align*}
I(\rho):=\int_{\rho>0} \frac{\|\nabla \rho\|^2}{\rho} \ud x.
\end{align*}
\begin{proposition} Under the standing assumptions on the Carnot-like cycle,
the power output~\eqref{Pbar}, is bounded by 
\begin{align}\label{eq:power-bound-HWI}
P
&\leq \frac{k_B^2 (T_h-T_c)^2}{16\gamma} I(\rho_a).
\end{align}
\end{proposition}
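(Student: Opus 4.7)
The plan is to reduce the power bound to an HWI-type functional inequality of the form
\[
\cS(\rho_b) - \cS(\rho_a) \leq k_B \sqrt{I(\rho_a)}\, W_2(\rho_a,\rho_b),
\]
and then substitute this into the power expression \eqref{Pbar}, treating $W_2(\rho_a,\rho_b)$ as a single free parameter whose value is to be optimized. The second step reduces to maximizing a concave quadratic in one variable and is routine; the crux lies in establishing the inequality. The label \eqref{eq:power-bound-HWI} already hints at this route: HWI is the classical inequality of Otto and Villani relating entropy, Wasserstein distance, and Fisher information, and what is needed here is its flat-space ($K=0$) specialization.

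To obtain the inequality, I would exploit McCann's displacement convexity of the Shannon entropy $H(\rho):=\int_{\mR^d}\rho\log\rho\,\ud x = -\cS(\rho)/k_B$ along Wasserstein geodesics in $\mR^d$. Let $\rho_t$, $t\in[0,1]$, denote the constant-speed $W_2$-geodesic connecting $\rho_a$ to $\rho_b$, with associated velocity field $v_t=\nabla\phi_t$ obeying the continuity equation \eqref{eq:continuity0}. Displacement convexity yields $H(\rho_b) \geq H(\rho_a) + \frac{\ud}{\ud t}H(\rho_t)\big|_{t=0^+}$. A direct computation, using \eqref{eq:continuity0} together with integration by parts, gives $\frac{\ud}{\ud t}H(\rho_t)\big|_{t=0^+} = \int_{\mR^d} \nabla \rho_a \cdot v_0\,\ud x$. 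Cauchy-Schwarz, together with the identity $\|v_0\|_{\rho_a} = W_2(\rho_a,\rho_b)$ (which holds for constant-speed Wasserstein geodesics), bounds the modulus of this quantity by $\sqrt{I(\rho_a)}\, W_2(\rho_a,\rho_b)$. Combining the convexity inequality with this upper bound on $|H'(0^+)|$ and the relation $\cS=-k_B H$ delivers the desired inequality.

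Substituting into \eqref{Pbar} yields
\[
P \leq \frac{(T_h-T_c)\, k_B \sqrt{I(\rho_a)}}{t_{\rm cycle}}\, W_2(\rho_a,\rho_b) - \frac{4\gamma}{t_{\rm cycle}^2}\, W_2(\rho_a,\rho_b)^2.
\]
Writing $y := W_2(\rho_a,\rho_b)/t_{\rm cycle}$, the right-hand side is a concave quadratic $(T_h-T_c) k_B \sqrt{I(\rho_a)}\, y - 4\gamma y^2$ in $y$, whose unconstrained maximum is attained at $y^{*} = (T_h-T_c) k_B \sqrt{I(\rho_a)}/(8\gamma)$ with value $k_B^2(T_h-T_c)^2 I(\rho_a)/(16\gamma)$, matching \eqref{eq:power-bound-HWI}.

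The main obstacle is the appeal to displacement convexity of the Shannon entropy: this is nontrivial but by now standard in optimal transport (see \cite{villani2003topics,ambrosio2008gradient}), and the Wasserstein calculus already invoked in Theorem \ref{thm1} makes its use natural here. Everything else is routine manipulation in the same style as the proof of Theorem \ref{thm1}.
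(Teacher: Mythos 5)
Your proposal is correct and follows essentially the same route as the paper: the key ingredient is the flat-space HWI inequality $\cS(\rho_b)-\cS(\rho_a)\le k_B\sqrt{I(\rho_a)}\,W_2(\rho_a,\rho_b)$, followed by optimizing a concave quadratic (the paper completes the square in $\Delta\cS$ after lower-bounding $W_2^2$ by $\Delta\cS^2/(k_B^2 I(\rho_a))$, while you maximize over $W_2/t_{\rm cycle}$ after upper-bounding $\Delta\cS$ — the same computation in a different variable, with the same result). The only substantive difference is that the paper simply cites HWI from the literature, whereas you sketch its derivation via displacement convexity; your sketch is the standard Otto--Villani argument and is sound.
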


\begin{proof} This is based on the following HWI inequality (see \cite{villani2008optimal,gentil2019entropic}  for details), 
\begin{equation*}
\cS(\rho_b)-\cS(\rho_a) \leq k_BW_2(\rho_a, \rho_b)\sqrt{I(\rho_a )},
\end{equation*}
Using the formula for power~\eqref{Pbar}, we have
	\begin{align*}
	 P&=\frac{(T_h-T_c)\Delta \cS}{\tf}-\frac{4\gamma }{\tf^2}W_2(\rho_a,\rho_b)^2\\
	&\leq \frac{(T_h-T_c)\Delta \cS}{\tf}-\frac{4\gamma}{\tf^2}\frac{\Delta \cS^2}{k_B^2I(\rho_a)}\\
	&=\!-\frac{4\gamma}{\tf^2}\!\frac{1}{k_B^2I(\rho_a )}\!\left(\!\Delta S\!-\!\frac{\tf k_B^2(T_h-T_c)}{8\gamma}I(\rho_a)\!\right)^2\\
	&\;\;\;\;\;\;+\frac{k_B^2(T_h-T_c)^2}{16\gamma}I(\rho_{a})\\
	&\leq \frac{k_B^2(T_h-T_c)^2}{16\gamma}I(\rho_{a}),
	\end{align*}
	concluding the bound~\eqref{eq:power-bound-HWI}. \end{proof}
	
	We point out that the bound~\eqref{eq:power-bound-HWI} is achieved when $\tf$ is given by its optimal value~\eqref{eq:t1t3free} and in the limit as $\rho_b \to \rho_a$. In particular, suppose $\rho_a$ and $\rho_b$ are Gaussian distributions $N(0,\sigma_a^2)$ and $N(0,\sigma_b^2)$, respectively, and that the cycle period $\tf$ is equal to the optimal value~\eqref{eq:t1t3free}, then as $\sigma_b \to \sigma_a$ the power output is given by~\eqref{eq:max-power-Gaussian}, which is equal to the bound~\eqref{eq:power-bound-HWI}, because $I(\rho_a)= \frac{1}{\sigma_a^2}$. 
	
\subsection{Maximum power under constrained potential}\label{sec:56}
While a lower bound on $\cS(\rho_a)$ readily implies an upper bound on the available power, achieving such a bound in general requires a cyclic operation involving an irregular and complicated potential function $U(t,x)$ to bring back the ensemble to $\rho_a$ at end of each cycle. It is unreasonable to expect technological solutions to such demands, and therefore, a constraint on the complexity of the potential function seems meaningful. 
To this end, we propose the constraint 
\begin{equation}\label{eq:U-constraint}
\frac{1}{\gamma}\int_{\mR^d}\|\nabla_x U(t,x)\|^2 \rho(t,x)\,\ud x \leq M
\end{equation}
for all $t\in(0,\tf)$. Thus, we analyze the maximum power~\eqref{Pbar} that can be extracted from a thermodynamic engine, under the constraint~\eqref{eq:U-constraint}.
\begin{thm} \label{thm:power-bound}
Consider a thermodynamic ensemble, undergoing a Carnot cycle as described in Section~\ref{sec:cycle}, governed with the over-damped Langevin equation~\eqref{eq:overdamped-Langevin}. Then, the maximum power $P$ that can be extracted from the cycle, over all marginal probability distributions  $\rho_a$ and $\rho_b$, the cycle period $\tf$, and all potential functions $U(t,x)$ that respect the bound~\eqref{eq:U-constraint},  satisfies
 \begin{equation}\label{eq:power-bound-U}
\frac{M}{8} (\frac{T_h}{T_c}-1) \frac{\frac{T_h}{T_c}-1}{\frac{T_h}{T_c}+1}\leq  P_{\rm max}\leq \frac{M}{8} (\frac{T_h}{T_c}-1)
 \end{equation}
\end{thm}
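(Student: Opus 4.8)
The plan is to prove the two inequalities separately: the upper bound by converting the pointwise actuation constraint \eqref{eq:U-constraint} into an \emph{a priori} bound on the admissible entropy change $\Delta\cS$ along the cold stroke, and the lower bound by exhibiting an explicit quadratic-potential (Gaussian) protocol that saturates the constraint on the compression stroke. Throughout I write $\theta=T_h/T_c$ and use the optimal equal split $t_1=t_3=\tf/2$ fixed in \eqref{eq:halfs}.

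For the upper bound I would first rewrite the constraint in transport form. Substituting $\nabla_x U=-\gamma v-k_BT\nabla_x\log\rho$ from \eqref{eq:v-def}, expanding the square, and using $\frac{\ud\cS}{\ud t}=-k_B\langle\nabla_x\log\rho,v\rangle_\rho$ (which follows from the continuity equation \eqref{eq:continuity0}) together with $\|\nabla_x\log\rho\|_\rho^2=I(\rho)$, the left-hand side of \eqref{eq:U-constraint} becomes
\[
\frac1\gamma\int_{\mR^d}\|\nabla_x U\|^2\rho\,\ud x=\gamma\|v\|_\rho^2-2T\frac{\ud\cS}{\ud t}+\frac{k_B^2T^2}{\gamma}I(\rho).
\]
Integrating this identity over the cold isothermal phase ($T=T_c$, duration $t_3$), and using $\int\frac{\ud\cS}{\ud t}\,\ud t=-\Delta\cS$, $\gamma\int\|v\|_\rho^2\,\ud t=\cW_{\rm irr}^{(3)}$ and the constraint, gives
\[
\cW_{\rm irr}^{(3)}+2T_c\,\Delta\cS+\frac{k_B^2T_c^2}{\gamma}J_3\le M t_3,\qquad J_3:=\int_{\mathrm{cold}}I(\rho)\,\ud t.
\]
A spatial Cauchy--Schwarz on $\frac{\ud\cS}{\ud t}$ followed by a temporal one yields $\Delta\cS^2\le\frac{k_B^2}{\gamma}\cW_{\rm irr}^{(3)}J_3$, hence $\frac{k_B^2T_c^2}{\gamma}J_3\ge \frac{T_c^2\Delta\cS^2}{\cW_{\rm irr}^{(3)}}$; inserting this and applying the arithmetic--geometric mean inequality gives $M t_3\ge \cW_{\rm irr}^{(3)}+\frac{T_c^2\Delta\cS^2}{\cW_{\rm irr}^{(3)}}+2T_c\Delta\cS\ge 4T_c\Delta\cS$, i.e. $\Delta\cS\le M t_3/(4T_c)$. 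Discarding the non-negative dissipation in \eqref{Pbar} and setting $t_3=\tf/2$ then gives $P\le (T_h-T_c)\Delta\cS/\tf\le\frac{M}{8}(\theta-1)$.

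For the lower bound I would specialize to zero-mean Gaussians $\rho_a=N(0,\sigma_a^2)$, $\rho_b=N(0,\sigma_b^2)$ driven by a quadratic potential along the $W_2$ geodesic (linear interpolation of $\sigma$), so that $W_2^2=(\sigma_b-\sigma_a)^2$, $\Delta\cS=k_B\log(\sigma_b/\sigma_a)$, and the spring constant $k$ satisfies $k\sigma=\tfrac{k_BT_c}{\sigma}+\tfrac{2\gamma(\sigma_b-\sigma_a)}{\tf}$ on the compression stroke, where the constraint $(k\sigma)^2/\gamma\le M$ is tightest at the most compressed state $\sigma=\sigma_a$. Taking $\sigma_b\to\sigma_a$ with $\tf\propto(\sigma_b-\sigma_a)$ and setting $a:=2\gamma(\sigma_b-\sigma_a)/\tf$, $b:=k_BT_c/\sigma_a$, the power \eqref{Pbar} reduces to $P=\frac1\gamma\big(\tfrac12(\theta-1)ab-a^2\big)$ subject to $a+b\le\sqrt{\gamma M}$. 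Since $P$ increases in $b$ the constraint is tight; maximizing the resulting concave quadratic in $a$ gives $a^\ast=\frac{(\theta-1)\sqrt{\gamma M}}{2(\theta+1)}$ and the value $\frac{M}{8}(\theta-1)\frac{\theta-1}{\theta+1}$, which is exactly the claimed lower bound.

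The main obstacle is in the upper bound: producing the \emph{sharp} constant $\tfrac18$ requires the precise combination above, since merely dropping the Fisher-information and dissipation terms from the integrated constraint yields only the weaker bound $\Delta\cS\le Mt_3/(2T_c)$ (off by a factor of four in $P$); the extra factor of two is recovered solely through the Cauchy--Schwarz/AM--GM coupling of $\cW_{\rm irr}^{(3)}$, $\Delta\cS$ and $J_3$. A secondary delicate point, on the lower-bound side, is verifying that the compression stroke indeed carries the binding constraint and that the limiting regime $\sigma_b\to\sigma_a$ (with $\tf$ scaled commensurately) is the one that realizes the extremum; this is what fixes the gap factor $\frac{\theta-1}{\theta+1}$ between the two bounds.
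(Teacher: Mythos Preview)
Your argument is correct and reaches the same conclusions as the paper, but the upper-bound route differs in an instructive way. The paper works directly with the entropy production rate on the cold stroke: writing
\[
\frac{\ud\cS}{\ud t}=-\frac{k_B}{\gamma}\big(\langle\nabla_x\log\rho,\nabla_x U\rangle_\rho+k_BT_c\|\nabla_x\log\rho\|_\rho^2\big),
\]
it applies Cauchy--Schwarz only to the cross term, $-\langle\nabla_x\log\rho,\nabla_x U\rangle_\rho\le\sqrt{\gamma M}\,\|\nabla_x\log\rho\|_\rho$, and then maximizes the resulting expression $\tfrac{k_B}{\gamma}(\sqrt{\gamma M}\,z-k_BT_c z^2)$ \emph{pointwise} over $z=\|\nabla_x\log\rho\|_\rho$, obtaining $\tfrac{\ud\cS}{\ud t}\le M/(4T_c)$ and hence $\Delta\cS\le M\tf/(8T_c)$ in one step. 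Your route---expand $\|\nabla_x U\|^2$ via \eqref{eq:v-def}, integrate to get $\cW_{\rm irr}^{(3)}+2T_c\Delta\cS+\tfrac{k_B^2T_c^2}{\gamma}J_3\le Mt_3$, then couple $\cW_{\rm irr}^{(3)}$, $\Delta\cS$ and $J_3$ through two Cauchy--Schwarz inequalities and AM--GM---is a legitimate alternative that lands on the same sharp constant $\tfrac18$; it simply trades a one-line pointwise optimization for three integral inequalities. The paper's argument is shorter; yours has the minor advantage of making the separate roles of the dissipation $\cW_{\rm irr}^{(3)}$ and the integrated Fisher information $J_3$ explicit.

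For the lower bound your construction is essentially the paper's: Gaussian marginals, quadratic potential, and the degenerate limit $\sigma_b\to\sigma_a$, $\tf\to0$ with the contraction rate $\lambda=-\dot\sigma/\sigma$ on the cold stroke held finite. The paper parametrizes by $(\lambda,\sigma)$ and then by dimensionless $(x,y)=(\lambda/\lambda_0,\sigma_0/\sigma)$, reducing to $\max\{\tfrac12(\theta-1)x-x^2/y^2:\ 0\le x\le y-y^2\}$; your variables $(a,b)$ with constraint $a+b\le\sqrt{\gamma M}$ are an equivalent reparametrization ($a=\gamma\lambda\sigma$, $b=k_BT_c/\sigma$), and both optimizations produce $\tfrac{M}{8}(\theta-1)\tfrac{\theta-1}{\theta+1}$. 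The ``delicate point'' you flag---that the binding constraint sits on the compression stroke---is precisely the simplification the paper makes as well: its reduced constraint \eqref{eq:lambda-constraint} is the cold-stroke one.
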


\begin{proof}
The proof for the upper-bound follows from bounding the entropy difference $S(\rho_b)-S(\rho_a)$ under the constraint~\eqref{eq:U-constraint}. During the isothermal transition in contact with the cold bath with temperature $T_c$, 
\begin{align*}
&\cS(\rho_b)-\cS(\rho_a) =   \cS(\rho(\frac{\tf}{2},\cdot))- \cS(\rho(\tf,\cdot)) \\
&=-\int_{\frac{\tf}{2}}^{\tf} \frac{\ud}{\ud t} \cS(\rho(t,\cdot))\,\ud t\\
&=k_B\int_{\frac{\tf}{2}}^{\tf}\int_{\mR^d} \log \rho(t,x) \frac{\partial \rho}{\partial t } (t,x)\,\ud x\, \ud t\\
&=\frac{-k_B}{\gamma}\int_{\frac{\tf}{2}}^{\tf}\int_{\mR^d} \lr{\nabla_x\log \rho}{\nabla_x U +k_BT_c\nabla_x \log \rho}\rho\,\ud x \ud t\\
&=\frac{-k_B}{\gamma}\int_{\frac{\tf}{2}}^{\tf} (\langle \nabla_x\log \rho, \nabla_x U\rangle_\Ltwo +k_BT_c \|\nabla_x \log \rho\|_\Ltwo^2 )\ud t,
\end{align*}
where with a slight abuse of notation we use $\langle \nabla_xf,\nabla_xg\rangle_\rho$ to also denote $\int_{\mR^d}\langle \nabla_xf,\nabla_xg\rangle \rho\ud x$.
By the Cauchy-Schwartz inequality and constraint~\eqref{eq:U-constraint}, 
\begin{align*}
- \langle \nabla_x \log \rho, \nabla_x U\rangle_\Ltwo&\leq \|\nabla_x U\|_\Ltwo  \|\nabla_x \log \rho\|_\Ltwo\\&\leq \sqrt{\gamma M}  \|\nabla_x \log \rho\|_\Ltwo.
\end{align*}
Hence, 
\begin{align*}
&\cS(\rho_b)-\cS(\rho_a) \\ \leq &\frac{k_B}{\gamma}\int_{\frac{\tf}{2}}^{\tf} \left(\sqrt{\gamma M}  \|\nabla_x \log \rho\|_\Ltwo -k_BT_c \|\nabla_x \log \rho\|_\Ltwo^2 \right)\ud t
\\ \leq &\frac{k_B}{\gamma}\int_{\frac{\tf}{2}}^{\tf}\frac{\gamma M}{4k_BT_C} \ud t=\frac{M}{8T_C} \tf.
\end{align*} 
This concludes  the bound $\Delta \cS \leq \frac{M}{T_C}\frac{\tf}{8}$ on the entropy difference, which yields to  upper-bound on the power output:
\begin{equation}
P\leq \frac{(T_h-T_c)}{\tf} \Delta \cS - \frac{1}{\tf}\Wirr \leq  \frac{M(T_h-T_c)}{8T_C}
\end{equation}
where $\Wirr\geq 0$ is used. 

Next, we prove the lower-bound by describing a setting so that the power is equal to the lower bound. Assume the marginal distributions $\rho_a$ and $\rho_b$ are Gaussian $N(0,\sigma_a^2)$ and $N(0,\sigma_b^2)$ respectively, and the potential function $U(t,x)=\frac{1}{2}a_tx^2$ is a quadratic function. In this setting, the exact power output is equal to
\begin{align*}
P=&\frac{1}{\tf}k_B(T_h-T_c)\log(\frac{\sigma_b}{\sigma_a})\\&- \frac{1}{\gamma\tf} \int_0^{\tf}( a_t - \frac{k_B T}{\sigma_t^2})^2 \sigma_t^2 \ud t 
\end{align*}
with update law for the variance given by the Lyapunov equation:
\begin{align*}
&\frac{\ud \sigma_t^2}{\ud t} = -2(\frac{a_t}{\gamma} -  \frac{k_B T}{\gamma \sigma_t^2} ) \sigma_t^2
\end{align*}
with the constraint~\eqref{eq:U-constraint} given by 
\begin{align*}
\frac{1}{\gamma}a_t^2\sigma_t^2 \leq M
\end{align*}
Then, in the limit as $\tf \to 0$,  and $\sigma_a=\sigma_b=\sigma$, the power output is equal to 
\begin{equation}
\begin{aligned}
&P=k_B(T_h-T_c)\frac{\lambda}{2} - \gamma  \lambda^2 \sigma^2
\end{aligned}
\label{eq:opt-power-t}
\end{equation}
with the constraint 
\begin{equation}\label{eq:lambda-constraint}
|\gamma \lambda + \frac{k_BT_C}{\sigma^2}| \leq \frac{\sqrt{\gamma M}}{\sigma},
\end{equation}
where we introduced a new variable $\lambda = \frac{a}{\gamma} - \frac{k_BT_c}{\gamma \sigma^2}$. It is shown in Appendix~\ref{apdx:lower-bound}, that the maximum of the expression~\eqref{eq:opt-power-t} over all values of $\lambda $ and $\sigma$ that satisfy the constraint~\eqref{eq:lambda-constraint}, is equal to 
\begin{equation*}
\frac{M}{8} (\frac{T_h}{T_c}-1)\frac{\frac{T_h}{T_c}-1}{\frac{T_h}{T_c}+1}
\end{equation*}
concluding the lower-bound.
\end{proof}

This final result is universal as it does not depend on the choice of $\rho_a$ and $\rho_b$, unlike \eqref{eq:power-bound-HWI}. Moreover, the bounds in this final result are especially appealing in that it the depend on the ratio $T_h/T_c$ of the absolute temperatures of the two heat baths.

\begin{remark}
It is noted that the upper bound in \eqref{eq:power-bound-U} on achievable power under the constraint \eqref{eq:U-constraint} does not depend on $\tf$, whereas our construction for achieving the lower bound ensures that the bound is approached as $\tf\to 0$.
\end{remark}

\begin{remark}
In the proof of Theorem \ref{thm:power-bound}, an operating point has been constructed to ensure that power equal the lower bound in \eqref{eq:power-bound-U} can be achieved. The parameters are given in equation \eqref{eq:oneline} in the Appendix. For this operating point, which corresponds to maximal power constrained by \eqref{eq:U-constraint}, the efficiency
turns out to be
\[
\eta=\frac{T_h-T_c}{T_h+T_c}.
\]
It is interesting to note that
\[
\eta_{SS}\leq \eta_{CA}\leq \eta \leq \eta_C,
\]
where $\eta_{SS}$ is the efficiency obtained by Schmiedl and Seifert given in \eqref{eq:eta-max-power}, $\eta_{CA}=1-\sqrt{T_c/T_h}$ is the Curzon-Ahlborn efficiency, and $\eta_C=1-T_c/T_h$ is the Carnot Efficiency. Furthermore, $\eta_{CA},\eta$ and $\eta_C$ tend to $1$ as $T_c\to 0$, while $\eta_{SS}\to 2/3$.
\end{remark}

\section{Concluding remarks}

The present work focused on quantifying the maximal power that can be drawn by a Carnot-like heat engine operating by alternating contact with two heat reservoirs and modeled by stochastic overdamped Langevin dynamics driven by the time dependent potential. The framework that the work is based on is that of Stochastic Thermodynamics~\cite{seifert2008stochastic,sekimoto2010stochastic,seifert2012stochastic,parrondo2015thermodynamics,dechant2016underdamped}, which allows quantifying energy and heat exchange by individual particles in a thermodynamic ensemble, to be subsequently averaged, so as to quantify performance of the thermodynamic process as a whole. 
A physically reasonable bound is derived, which is shown to be reached within a specified factor, both depending on the ratio $T_h/T_c$ of the absolute temperatures of the two heat baths, hot and cold, respectively. The present work is quite distinct from earlier results, within a similar framework, which is however restricted to Gaussian states. Conditions that suggest non-physical conclusions are highlighted, and a suitable constraint on the controlling potential is brought forth that underlies our analysis.

In the past few decades, there have been several attempts to quantify efficiency mainly, but also power, of thermodynamic processes operating in Carnot-like manner. It is fair to say that there has been neither a consensus on the type of assumptions that have been used by previous authors, and thereby, nor full consistency of the results.
This is to be expected, since finite-period operation and finite-time thermodynamic transitions require substance/engine dependent assumptions to capture the complexity of heat transfer in non-equilibrium states. Thus, estimated bounds may never reach the ``universality'' of the celebrated Carnot efficiency. They are expected to provide physical insights and guidelines for engineering design. Thus, it will be imperative that these estimates be subject to experimental testing. The notable feature of our conclusions as compared to earlier works is that the expressions we derive are given in the form of ratio of absolute temperatures--a physically suggestive feature.

The present work follows a long line of contributions within the control field to draw links between thermodynamics and control, see e.g., \cite{brockett1979stochastic,pavon1989stochastic,mitter2005information,sandberg2014maximum,rajpurohit2017stochastic,wallace2014thermodynamics}.
More recently,
important insights have linked the Wasserstein distance of optimal mass transport, which itself is a solution to a stochastic control problem, to the dissipation mechanism in stochastic thermodynamics
\cite{aurell2011optimal,aurell2012refined,seifert2012stochastic,chen2019stochastic,dechant2019thermodynamic}. Indeed, the Wasserstein metric takes the form of an action integral and arises naturally in the energy balance of thermodynamic transitions. This fact has been explored and developed for the overdamped Langevin dynamics studied herein. Whether similar conclusions can be drawn for underdamped Langevin dynamics remains an open research direction at present. Furthermore, much work remains to reconcile and compare alternative viewpoints and models for thermodynamic processes including those based on the Boltzmann equation.

Besides the potentially intrinsic value of the analysis and bounds that have been derived, it is hoped that the control-theoretic aspect of the problem to optimize Carnot-like cycling of thermodynamic process has been sufficiently highlighted, and that this work will serve to raise attention on this important and foundational topic to the control community.
\appendix

		\subsection{Proof of Lemma~\ref{prop:opt-pb}}\label{apdx:opt-pb}
					Consider an arbitrary smooth vector field with bounded support $\xi \in C_{0}^{\infty}(\mR^d, \mR^d)$. Let $\Psi_s:\mR^d \to\mR^d$ be the map generated by $\xi$ defined according to  
			\begin{align*}
			\frac{\partial}{\partial s}\Psi_s(x)=\xi (\Psi_s(x)),\quad \Psi_{0}={\rm Id},
			\end{align*}
			for $x \in \mR^d$ and $s \in \mR^d$. Then, define 
			\begin{equation*}
			\rho_s := \Psi_s \sharp \rho_b^*.
			\end{equation*}
			We claim that 
			\begin{equation}\label{eq:claim}
			\begin{aligned}
			&\lim_{s \to 0} \frac{1}{s} (f(\rho_s) -f(\rho_b^*)) \geq  \int  \langle D_f(x),\xi(x)\rangle \rho_b^*(x)\ud x,
			\end{aligned}
			\end{equation}
			where, for $\Delta T:=T_h-T_c$,
			\begin{equation*}
			D_f(x)= -\frac{k_B\Delta T}{t_{\rm cycle}} \nabla \log(\rho^*_b(x)) + \frac{8\gamma}{t_{\rm cycle}^2} (\nabla \psi^{-1}(x)-x).
			\end{equation*}
			Assuming the claim is true (to be shown shortly), then,  because $\rho^*_b$ is the maximizer, $f(\rho_s) \leq  f(\rho^*_b) $.   Therefore 
			\begin{align*}
		 \int  \langle D_f(x),\xi(x)\rangle \rho_b^*(x)\ud x& \leq \lim_{s \to 0} \frac{ f(\rho_s)\!\! -\!\! f(\rho_b^*)}{s}
			\leq 0.
			\end{align*}
			Hence, by symmetry $\xi \to -\xi$,  
			\begin{equation}
		 \int  \langle D_f(x),\xi(x)\rangle \rho_b^*(x)\ud x = 0.
			\end{equation}
			This is true for all vector fields $\xi \in C^\infty_0(\mR^d,\mR^d)$. As a result, $D_f(x)=0$, concluding~\eqref{optimalrhob} and the lemma. 
			
			It remains to prove~\eqref{eq:claim}. By definition, the difference $f(\rho_s) -  f(\rho^*_b)$ is 
			\begin{align*}
			f&(\rho_s) -  f(\rho^*_b) = 
			\frac{\Delta T}{t_{\rm cycle}}(\cS(\rho_s) - \cS(\rho_b^*)) \\&- \frac{4\gamma}{t_{\rm cycle}^2}(W_2(\rho_a,\rho_s)^2 - W_2(\rho_a,\rho_b^*)^2).
			\end{align*}
			The entropy  term
			\begin{align*}
			\cS(\rho_s) =& -k_B\int  \log(\rho_s(x)) \rho_s(x)\ud x  \\
			=&-k_B \int \log(\rho_s(\Psi_s(x))) \rho_b^*(x) \ud x \\
			=&-k_B \int \log(\frac{\rho^*_b((x))}{\text{det}(\nabla \Psi_s(x))}) \rho_b^*(x) \ud x \\
			=& S(\rho_b^*)  +k_B\int \log(\text{det}(\nabla \Psi_s(x))) \rho_b^*(x) \ud x.
			\end{align*}
			Therefore
			\begin{align*}
			&\lim_{s \to 0} \frac{1}{s} (S(\rho_s)- S(\rho_b^*)) \\=&  \lim_{s \to 0} \frac{k_B}{s} \int  \log(\text{det}(\nabla \Psi_s(x))) \rho_b^*(x) \ud x \\
			=&k_B \int \nabla \cdot \xi(x) \rho^*_b(x)\ud x\\
			= & -k_B\int \langle \xi(x), \nabla \log(\rho^*_b(x))\rangle \rho^*_b(x)\ud x.
			\end{align*}
			The Wasserstein term 
			\begin{align*}
			&W_2(\rho_a,\rho_s)^2 - W_2(\rho_a,\rho_b^*)^2\\ 
			\leq  &\int\|\nabla \psi^{-1}(x) - \Psi_s(x) \|^2 \rho_b^{*}(x)\ud x \\&- \int\|\nabla \psi^{-1}(x) - x\|^2 \rho_b^{*}(x)\ud x\\= &\int\langle x-\Psi_s(x),2\nabla \psi^{-1}(x) -x-\Psi_s(x)\rangle\rho_b^{*}(x)\ud x.
			\end{align*}
			Therefore
			\begin{align*}
			&\lim_{s\to 0}\frac{1}{s}\left[W_2(\rho_a,\rho_s)^2 - W_2(\rho_a,\rho_b^*)^2\right]\\
			\leq&-2 \int \langle \xi(x),\nabla \psi^{-1}(x) -x\rangle \rho_b^{*}(x)\ud x.
			\end{align*}
			Using  the two expressions, the one for derivative of the entropy and the other for the Wasserstein distance, the claim follows.  
			
		\subsection{Proof of Proposition \ref{prop:optgau}}\label{apdx:optgau}
					According to Proposition~\ref{prop:prop1}, the maximizer is unique. Therefore, it is sufficient to show that the Gaussian distribution $N(0,\sigma_b^2)$, where $\sigma_b^2$ is given by \eqref{eq:opt-sigmab}, satisfies the optimality condition~\eqref{optimalrhob}. 
			When $\rho_a, \rho_b^{*}$ are Gaussian, $\nabla \psi^{-1}(y)=\frac{\sigma_a}{\sigma_b}y$. Hence, the optimality condition reads 
			\begin{align*}
			& \frac{k_B t_{\rm cycle}\Delta T}{8\gamma}\nabla \log \rho_b^{*}(y)-y+\nabla \psi^{-1}(y)\\
			=& \frac{k_B t_{\rm cycle}\Delta T}{8\gamma}\frac{y}{\sigma_b^2}-(1-\frac{\sigma_a}{\sigma_b})y\\
			=&(\frac{k_B t_{\rm cycle}\Delta T}{8\gamma\sigma_b^2}-1+\frac{\sigma_a}{\sigma_b})y=0,\quad \forall \; y \in \mathbb{R},
			\end{align*}
			which is satisfied when
			$\sigma_b$ is according to~\eqref{eq:opt-sigmab}. 
		
\subsection{Proof of statements following Remark~{\ref{rem:pa-minimizer}}}	\label{apdx:pa-minimizer}	
Let $\calA_{0,\sigma^2}$ denote the set of absolutely continuous distributions with mean $0$ and variance $\sigma^2$ :
\begin{align*}
\calA_{0,\sigma^2}&:=\bigg \{\rho \in \Pac;\\&~ \int x \rho(x) \ud x=0, \int x^2 \rho(x) \ud x=\sigma^2 \bigg\},
\end{align*}
and consider the functional
\begin{equation}
g(\rho_a) = -\frac{T_h-T_c}{t_{\rm cycle}}S(\rho_a)-\frac{4\gamma}{t_{\rm cycle}^2}W_2(\rho_a,\rho_b)^2
\end{equation}
that represents the portion of power given in~\eqref{Pbar} that depends on $\rho_a$. 
\begin{romannum}
	\item We first show that \[\max_{\rho_a \in \calA_{0,\sigma_a^2}}g(\rho_a)\] is unbounded, and hence that the maximizer does not exist. 
	Consider a sequence of density functions $\{\mu_n\}_{n \in \mathbb{N}}$ according to
	\begin{align*}
	\mu_n\ =\frac{1}{2}\mu^{(1)}_n+\frac{1}{2}\mu^{(2)}_n,
	\end{align*}
	where
	\begin{equation}
	\begin{aligned}
	\mu^{(1)}_n&=N(\sqrt{\sigma_a^2-\frac{1}{n^2}}, \frac{1}{n^2})\\
	\mu^{(2)}_n&=N(-\sqrt{\sigma_a^2-\frac{1}{n^2}}, \frac{1}{n^2})
	\end{aligned}
	\end{equation}
	It is easy to verify that $\mu_n \in \mathcal{A}_{0, \sigma_a^2}$. 
	The goal is to show that $\lim_{n\to\infty}g(\mu_n)=\infty$. 
	The entropy  $\cS(\mu_n)$ is bounded from above as follows, 
	\begin{equation*}		
		\begin{aligned}
	\cS(\mu_n) & \leq \frac{1}{2}(k_B\log(2)\! +\! \cS(\mu^{(1)}_n))\! + \!\frac{1}{2}(k_B\log(2)\! +\! \cS(\mu^{(2)}_n))\\&=k_B\log 2+\frac{k_B}{2}\log(2\pi e \frac{1}{n^2})\\
	&=k_B\log 2\sqrt{2\pi e}- k_B\log n.
	\end{aligned} 
	\end{equation*}
	The first inequality follows from a respective bound on the entropy of Gaussian mixtures~\cite[Theorem 3]{huber2008entropy}.
	
	The Wasserstein distance $W_2(\mu_n, \rho_b)^2$ is bounded by
	\begin{equation}
	\begin{aligned}
	W_2(\mu_n, \rho_b)^2&\leq \frac{1}{2}W_2(\mu^{(1)}_n, \rho_b)^2+\frac{1}{2}W_2(\mu_n^{(2)}, \rho_b)^2\\
	&=W_2(\mu_n^{(1)}, \rho_b)^2\\
	&=(\sqrt{\sigma_a^2-\frac{1}{n^2}}-0)^2+(\frac{1}{n}-\sigma_b)^2\\
	&=\sigma_a^2+\sigma_b^2-2\frac{\sigma_b}{n},
	\end{aligned}
	\end{equation}
	where the convexity of the functional $W_2^2(\cdot,\rho_b)$ is used~\cite[Eq.(2.12)]{carlen2003constrained}. 
	Combining the two bounds for the entropy and Wasserstein distance yields
	\begin{equation*}
	\begin{aligned}
	g&(\mu_n)=-\frac{\Delta T}{\tf}\cS(\mu_n)-\frac{4}{\tf^2}W_2(\mu_n,\rho_b)^2\\
	&\geq \frac{k_B\Delta T}{\tf}\left(\!-\!\log 2\sqrt{2\pi e}+\log n\!\right)\\
	& \;\;\;\;-\frac{4}{\tf^2}\left(\sigma_a^2+\sigma_b^2-2\frac{\sigma_b}{n}\right).
	\end{aligned}
	\end{equation*}
	Taking the limit $n \to \infty$ proves\ $\lim_{n\to\infty}g(\mu_n)$ is unbounded, and hence that there is no maximizer. Next, in (ii) and (iii) we show that the Gaussian distribution is instead a local minimizer, under certain conditions, and hence that it is the opposite of distributions that we seek.\\[.02in]

\item We now perform first-order optimality analysis for the problem 
\[\max_{\rho_a \in \calA_{0,\sigma_a^2}}g(\rho_a)\]  at $\rho_a=N(0,\sigma_a^2)$.  
Consider a smooth vector field with bounded support $\xi \in C_{0}^{\infty}(\mR, \mR)$ such that 
\begin{equation}\label{eq:mean-xi}
\int_{\mR^d} \xi(x) \rho_a(x) dx=0.
\end{equation} 
Then, as before, define the flow $\Psi_s:\mR \to \mR$ generated by $\xi$ according to  
\begin{align*}
\frac{\partial}{\partial s}\Psi_s(x)=\xi (\Psi_s(x)),\quad \Psi_{0}={\rm Id},
\end{align*}
for $x \in \mR$ and $s \in \mR$.  Now define
\begin{equation*}
\tilde{\rho}_s := \Psi_s\sharp \rho_a.
\end{equation*}
Because of~\eqref{eq:mean-xi}, the mean of the distribution $\tilde{\rho}_s$ remains constant at $0$. However, the variance changes from $\sigma_a^2$, and $\tilde{\rho}_s$ is not inside $\calA_{0,\sigma_a^2}$.

In order to keep the variance constant at $\sigma_a^2$, we project $\tilde{\rho}_s$ into $\calA_{0,\sigma_a^2}$ by setting
\begin{equation*}
\rho_s = G_s \sharp\tilde{\rho}_s,
\end{equation*}
where 
$
G_s(x): =r(s)x
$
and
\begin{align*}
r(s)=\frac{\sigma_a}{\sqrt{\int_{\mR^d}|y|^2\tilde{\rho}_s(y)\ud y}}=\frac{\sigma_a}{\sqrt{\int _{\mR^d} |\Psi_s(x)|^2 \rho_a(x) \ud x}}.
\end{align*}
By  definition of $G_s$, the variance of $\rho_s$ is equal to $\sigma_a^2$, hence $ \rho_s \in \calA_{0,\sigma_a^2}$. 

Following the same procedure as in the proof of Lemma \ref{prop:opt-pb}, the first-order optimality condition is
\begin{equation}\label{eq:opt-cond}
\int \langle D_g(x), v(x)\rangle \rho_a(x)\ud x = 0,
\end{equation}
where
\begin{equation}\label{eq:v-def-apdx}
\begin{aligned}
v(x) &= \frac{\ud }{\ud s} G_s(\Psi_s(x))\vert_{s=0}\\&=\xi(x) - \frac{x}{\sigma_a^2}\int \xi(z)z\rho_a(z)\ud z,
\end{aligned}
\end{equation}
and
\begin{equation*}
D_g(x)= \frac{k_B\Delta T}{\tf} \nabla \log(\rho^*_a(x)) + \frac{8\gamma}{\tf^2} (\nabla \psi(x)-x)
\end{equation*}
where $\nabla \psi$ is the optimal transport map from $\rho_a$ to $\rho_b$. For the setting where $\rho_a$ and $\rho_b$ are $N(0,\sigma_a^2)$ and $N(0,\sigma_b^2)$, respectively, $\nabla \psi(x) = \frac{\sigma_b}{\sigma_a}x$ and 
\begin{equation}\label{eq:Dg}
D_g(x)=- \frac{k_B\Delta T}{\tf} x+ \frac{8\gamma}{\tf^2} (\frac{\sigma_b}{\sigma_a}-1)x.
\end{equation}
Letting $\alpha:=- \frac{k_B\Delta T}{\tf} + \frac{8\gamma}{\tf^2} (\frac{\sigma_b}{\sigma_a}-1)$ and inserting \eqref{eq:Dg}
into~\eqref{eq:opt-cond} yields
\begin{equation*}
\alpha  \int x v(x) \rho_a(x)\ud x = 0,
\end{equation*}
which is satisfied because of~\eqref{eq:v-def-apdx}.  Hence, $\rho_a$ being $N(0,\sigma_a^2)$ satisfies the first-order optimality condition.\\[.02in]

\item We follow up by carrying out second-order analysis. The objective is to show that the limit
\begin{align*}
&\lim_{s \to 0} \frac{1}{s^2}\left(g(\rho_s) + g(\rho_{-s}) - 2g(\rho_a)\right)=\\
&-\frac{\Delta T}{\tf}\lim_{s \to 0} \frac{\cS(\rho_s) + \cS(\rho_{-s}) - 2
	\cS(\rho_a)}{s^2}\\
&-\frac{4\gamma }{\tf^2}\lim_{s \to 0} \frac{W_2(\rho_s,\rho_b)^2 + W_2(\rho_{-s},\rho_b)^2 - 2W_2(\rho_a,\rho_b)^2}{s^2}
\end{align*}
can be strictly positive.
Assume $\xi(x) = \nabla \eta(x)$ for some $\eta$, and define 
\[
\zeta(x)=\eta(x)-\frac{x^2}{2\sigma_a^2}\int_{\mR} z\nabla \eta (z)\rho_a(z) \ud z,
\]
For the second order derivative of the entropy, we use the existing results from~\cite[Eq. (2.30), Eq.(3.37)]{carlen2003constrained},
where it is shown that
\begin{align}		\label{eq:ddS}
\frac{\ud^2}{\ud s^2}S(\rho_s)\bigg \vert_{s=0}&=-k_B\int \left(\|\nabla^2 \zeta\|_F^2+ \frac{1}{\sigma_a^2}\|\nabla \zeta\|^2\right)\rho_a \ud x
\end{align} 

Next, we consider the  second order derivative of the Wasserstein distance. Since $\nabla \psi \sharp \rho_a = \rho_b$ and  $(G_s \circ \Psi_s)  \sharp \rho_a=\rho_s$,  we have
\begin{align*}
W_2(\rho_b, \rho_s)^2 \leq \int_{\mR} |\nabla \psi(x)-G_s (\Psi_s(x))|^2  \rho_a(x) \ud x.
\end{align*}
As a result
\begin{align*}
&\lim_{s \to 0} \frac{W_2(\rho_s, \rho_b)^2+W_2(\rho_{-s}, \rho_b)^2-2W_2(\rho_a,\rho_b)^2}{s^2}\\
\leq &\lim_{s \to 0} \frac{1}{s^2}\bigg[ 
\int | \nabla \psi (x)-G_s(\Psi_s(x))|^2\rho_a(x)\ud x +\\
&\quad\quad+\int | \nabla \psi (x)-G_{-s}(\Psi_{-s}(x))|^2\rho_a(x)\ud x\\
&\quad\quad-2\int | \nabla \psi (x)-x|^2\rho_a(x)\ud x\bigg]\\
=&\lim_{s \to 0} \frac{1}{s^2}\bigg[ \int \left(|G_s(\Psi_s(x))|^2+|G_{-s}(\Psi_{-s}(x))|^2\right) \rho_a(x)\ud x 
\\&\quad\quad\quad-2\int \left(|x|^2+\langle \Omega_s(x), \nabla \psi (x)\rangle \right) \rho_a(x)\ud x \bigg],
\end{align*} 
where $\Omega_s(x) = G_s (\Psi_s(x))\!+\!G_{-s}(\Psi_{-s}(x))\!-\!2x$.  The first three terms cancel out, because the variance is constant. Therefore, the limit simplifies to
\begin{equation}
\label{eq:W2-dt2-temp-1}	
\begin{aligned}
&\lim_{s \to 0} \frac{W_2(\rho_s, \rho_b)^2+W_2(\rho_{-s}, \rho_b)^2-2W_2(\rho_a,\rho_b)^2}{s^2}\\
\leq&-2\!\int_{\mR^d}\lim_{s \to 0} \frac{\Omega_s(x)}{s^2}\cdot \nabla \psi (x)\rho_a(x)
\ud x\\
=&-2\!\!\int _{\mR^d}\frac{\partial^2 G_s (\Psi_s(x))}{\partial s^2}\bigg \vert_{s=0} \nabla \psi (x) \rho_a(x) \ud x\\
=&-2\frac{\sigma_b}{\sigma_a} \int_{\mR^d}\frac{\partial ^2 G_s (\Psi_s(x))}{\partial{s^2}}(x)\bigg \vert_{s=0} x \rho_a(x) \ud x,
\end{aligned}
\end{equation}
where $\nabla \psi (x)=\frac{\sigma_b}{\sigma_a}x$ is used in the last step. 
Next we compute $\frac{\partial ^2 G_s (\Psi_s(x))}{\partial{s^2}}(x) \vert_{s=0} $. Differentiating once gives
\begin{align*}
\frac{\partial{G_s (\Psi_s(x))}}{\partial{s}}&= \frac{\partial} {\partial s} (r(s) \Psi_s(x))\\&=r(s) \nabla \eta \left(\Psi_s(x)\right)+\dot{r}(s)\Psi_s(x).
\end{align*}
Differentiating twice and evaluating at $s=0$ gives
\begin{align*}
&\frac{\partial^2{G_s (\Psi_s(x))}}{\partial{s^2}}\bigg \vert_{s=0}\\
= &2\dot{r}(0) \nabla \eta (x)+r(0)\nabla^2 \eta(x)\nabla \eta(x)+\ddot{r}(0)x.
\end{align*}
Inserting this expression into~\eqref{eq:W2-dt2-temp-1} gives
\begin{equation}
\label{eq:W2-dt2-temp-2}	
\begin{aligned}
&\lim_{s \to 0} \frac{W_2(\rho_s, \rho_b)^2+W_2(\rho_{-s}, \rho_b)^2-2W_2(\rho_a,\rho_b)^2}{s^2}\\
\leq&-2\frac{\sigma_b}{\sigma_a} \bigg[2\dot{r}(0) \int x \nabla \eta(x)\rho_a(x)\ud x  \\
&+  \int x\nabla^2\eta(x)\nabla \eta(x) \rho_a(x)\ud x + \sigma_a^2\ddot{r}(0)\bigg].
\end{aligned}
\end{equation}
Inserting the derivatives of $r(s)$,
\begin{align*}
\dot{r}(0) &=- \frac{1}{\sigma_a^2}\int \nabla \eta(x)x\rho_a(x)\ud x,\\
\ddot{r}(0)&= \frac{3}{\sigma_a^4}\left(\int \nabla \eta(x)x\rho_a(x)\ud x\right)^2\\
&-\frac{1}{\sigma_a^2}\int (|\nabla \eta(x)|^2 + x\nabla^2\eta(x)\nabla \eta(x))\rho_a(x)\ud x,
\end{align*}
gives
\begin{align}\nonumber
&\lim_{s \to 0} \frac{W_2(\rho_s, \rho_b)^2+W_2(\rho_{-s}, \rho_b)^2-2W_2(\rho_a,\rho_b)^2}{s^2}\\\nonumber
\leq&-\frac{2\sigma_b}{\sigma_a} \bigg[\frac{1}{\sigma_a^2}\left(\int x\nabla \eta(x)\rho_a(x)\ud x\right)^2-\int |\nabla \eta |^2\rho_a\ud x\bigg]\\
=& \;\;2\frac{\sigma_b}{\sigma_a}\int |\nabla \zeta(x)|^2\rho_a(x)\ud x.\label{eq:W2-dt2-temp-3}	
\end{align}
Using \eqref{eq:ddS} and \eqref{eq:W2-dt2-temp-3}, we conclude that
\begin{align*}
&\lim_{s \to 0} \frac{1}{s^2}\left(g(\rho_s) + g(\rho_{-s}) - 2g(\rho_a)\right)\\\geq\;\;
& 
(\frac{k_B\Delta T}{\tf\sigma_a^2} - \frac{8\gamma \sigma_b}{\tf^2\sigma_a}) \int  \|\nabla \zeta\|^2 \rho_a \ud x.
\end{align*}
Hence, when 
$\sigma_b \in (\sigma_a, \frac{k_B \Delta T\tf}{8\gamma\sigma_a}]$, the second-order variation is positive and $\rho_a=N(0,\sigma_a^2)$ is a local minimizer. 
\end{romannum}

\subsection{Proof of the lower-bound in Theorem~\ref{thm:power-bound}}\label{apdx:lower-bound}
	The constraint \eqref{eq:lambda-constraint} is expressed as: 
\begin{equation*}
0 \leq  \lambda \leq  \frac{\sqrt{\gamma M}}{\gamma \sigma} - \frac{k_BT_c}{\gamma \sigma^2}, \mbox{ for }\sigma\geq\frac{k_BT_c}{\sqrt{\gamma M}}.
\end{equation*}
The inequality $\lambda\geq 0$ ensures that the power is non-negative, whereas $\sigma\geq\frac{k_BT_c}{\sqrt{\gamma M}} $ ensures that the upper bound is positive.   
We utilize dimensionless variables
\begin{align*}
x&:=\frac{\lambda}{\lambda_0},\quad
y:=\frac{\sigma_0}{\sigma} 
\end{align*}
for 
$
\sigma_0 := k_BT_c/\sqrt{\gamma M}$, $\lambda_0:=M/k_BT_c 
$, and re-write \eqref{eq:opt-power-t} and the constraints,
\begin{align*}
&P=Mf(x,y)\\
&0\leq x\leq g(y),\quad 0<y\leq 1
\end{align*}
where 
$
f(x,y)=\frac{\Delta T}{2T_C}x - \frac{x^2}{y^2}$, $
g(y) = y-y^2$. As long as $y\leq y_0$, where $y_0= \frac{1}{1+\frac{\Delta T}{4T_C}}$, the unconstrained maximizer
\begin{equation*}
x^*(y) = \argmax _x~f(x,y) = \frac{\Delta T}{4T_C}y^2
\end{equation*}
satisfies the constraint $x^*(y)\leq g(y)$. When $y_0<y\leq 1$, the maximizer is at $x=g(y)$. Hence,
\begin{align*}
\max_{x\leq y-y^2}f(x,y)&= \begin{cases}
\frac{(\Delta T)^2}{16T_C^2}y^2,\quad 0<y\leq y_0\\
\frac{\Delta T}{2T_C}(y-y^2)-(1-y)^2,\quad y_0\leq y \leq 1 
\end{cases}.
\end{align*}
Maximizing the expressions in the two cases over $y$ gives
\begin{align*}
 \max\left\{\left(\frac{\Delta T}{3T_C+T_H}\right)^2,\frac{(\Delta T)^2}{8T_C(T_C+T_H)}\right\}= \frac{(\Delta T)^2}{8T_C(T_C+T_H)}.
\end{align*} 
This is achieved for
\begin{equation}\label{eq:oneline}
\sigma=\frac{k_BT_c}{\sqrt{\gamma M}}\frac{2(T_h+T_c)}{(T_h+3T_c)},\;
\lambda=\frac{M}{k_BT_c}\frac{(T_h+3T_c)
(T_h-T_c)}{4(T_h+T_c)^2}.
\end{equation}

\subsection{Acknowledgments}
The research was supported in part by the NSF under grants 1807664, 1839441, 1901599, and the AFOSR under FA9550-17-1-0435.

\bibliographystyle{plain}
\bibliography{refs}

\end{document}